


 \documentclass[final,1p,times]{elsarticle}


\usepackage{amssymb}
 \usepackage{amsthm}
\usepackage{amsmath,amssymb,amsopn,amsfonts,mathrsfs,amsbsy,amscd}
\usepackage{longtable}
 \usepackage{lineno}





\newcommand{\prs}{\langle\;,\;\rangle}

\newcommand{\too}{\longrightarrow}
\newcommand{\om}{\omega}

\newcommand{\esp}{\quad\mbox{and}\quad}

\def\br{[\;,\;]}

\newcommand{\G}{\mathfrak{g}}
\newcommand{\g}{\mathfrak{g}}
\newcommand{\rf}{\mathfrak{r}}
\newcommand{\uf}{\mathfrak{u}}
\newcommand{\af}{\mathfrak{a}}

\newcommand{\h}{{\mathfrak{h}}}

\newcommand{\s}{{\mathfrak{s}}}

\newcommand{\ad}{{\mathrm{ad}}}

\newcommand{\tr}{{\mathrm{tr}}}
\newcommand{\ric}{{\mathrm{ric}}}

\newcommand{\B}{{\cal B}}

\newcommand{\p}{{\mathfrak{p} }}
\newcommand{\n}{{\mathfrak{n} }}
\newcommand{\kf}{{\mathfrak{k} }}

\newcommand{\di}{\displaystyle}

\newcommand{\na}{\nabla}

\newcommand{\Ric}{\mathrm{Ric}}

\newcommand{\al}{\alpha}
\newcommand{\be}{\beta}

\newcommand{\Ga}{\Gamma}
\newcommand{\e}{\epsilon}

\newcommand{\la}{\lambda}

\newcommand{\de}{\delta}

\newtheorem{theo}{Theorem}[section]
\newtheorem{pr}{Proposition}[section]

\newtheorem{co}{Corollary}[section]

\newtheorem{conjecture}{Conjecture}
\font\bb=msbm10

\def\B{\hbox{\bb B}}
\def\R{\hbox{\bb R}}

\begin{document}

\begin{frontmatter}
	
	
	
	\title{ Left invariant Riemannian metrics with harmonic curvature are 
		Ricci-parallel in solvable Lie groups and Lie groups of dimension $\leq6$}
	
	\author[label1,label2]{Ilyes Aberaouze, Mohamed Boucetta}
	\address[label1]{Universit\'e Cadi-Ayyad\\
		Facult\'e des sciences et techniques\\
		BP 549 Marrakech Maroc\\e-mail: aberaouzeilyes@gmail.com 
		
	}
	\address[label2]{Universit\'e Cadi-Ayyad\\
		Facult\'e des sciences et techniques\\
		BP 549 Marrakech Maroc\\e-mail: m.boucetta@uca.ac.ma
	}
	
	
	
	
	
	\begin{abstract} We show that any left invariant metric with harmonic curvature on a solvable Lie group is Ricci-parallel. We show the same result for any Lie group of dimension $\leq$ 6.

	\end{abstract}
	
	\begin{keyword} Left invariant metric \sep  Lie groups  \sep Harmonic curvature \sep 
		\MSC 22E15 \sep \MSC 53C20 \sep \MSC 22E25

		\end{keyword} 
		
\end{frontmatter}

\section{Introduction}\label{section1}

A Riemannian manifold $(M,\prs)$ is said to have {\it harmonic curvature} if its curvature tensor $R$ has a vanishing divergence, i.e., for any $X\in\Ga(TM)$,
\[ \sum_{i=1}^n\na_{E_i}(R)(E_i,X)=0, \]where $\na$ is the Levi-Civita connection and $(E_1,\ldots,E_n)$ is a local $\prs$-orthonormal frame field.
The study of Riemannian manifolds with harmonic curvature is old and goes to \cite{Lich} and
it is known \cite{bour, der1, der2, gray} that a Riemannian manifold $(M,\prs)$ has a harmonic curvature if and only if its Ricci tensor $\Ric$ is  Codazzi, i.e.,
\begin{equation}\label{eq}
\na_X(\Ric)(Y,Z)=\na_Y(\Ric)(X,Z)
\end{equation}for any vector fields $X,Y,Z$. Obviously, any Einstein Riemannian metric $(\Ric=\la\prs)$ and more generally any Ricci-parallel metric $(\na(\Ric)=0)$ satisfy \eqref{eq}. Thus the  interesting case will be where $\Ric$ satisfies \eqref{eq} and $\na(\Ric)\not=0$. Unfortunately, such metrics are difficult to find and some examples were given in \cite{bess, der1, der2, gray}. To our knowledge, there is no example of a non Ricci-parallel homogeneous Riemannian manifold with  harmonic curvature  which supports the following conjecture.
\begin{conjecture}\label{conj} Any homogeneous Riemannian manifold $M$ with harmonic curvature is Ricci-parallel.
	
	\end{conjecture}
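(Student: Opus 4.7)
The plan is to attack the conjecture in stages, beginning with the simplest algebraic classes of isometry groups. A natural first step is to reduce from a general homogeneous Riemannian manifold to a Lie group with a left-invariant metric: since both the Codazzi condition on $\Ric$ and the Ricci-parallel condition are purely curvature-theoretic and insensitive to passage to the universal cover, one can focus on a simply connected transitive group $G$ and then seek to descend to a genuine left-invariant setting. The problem then becomes algebraic: given a metric Lie algebra $(\g,\prs)$ whose curvature tensor is harmonic, show that $\na(\Ric)=0$ at the identity.

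The next step is to expand the Codazzi identity \eqref{eq} in terms of the Lie bracket and the metric. Using that for a left-invariant metric $\na_XY=\frac{1}{2}([X,Y]-\ad_X^*Y-\ad_Y^*X)$, one finds that $\na_X(\Ric)(Y,Z)-\na_Y(\Ric)(X,Z)$ translates into a tensorial identity involving $\Ric$, $\ad$ and $\ad^*$. I would then decompose $\g$ into the eigenspaces $\g=\bigoplus_\la V_\la$ of the self-adjoint Ricci operator and evaluate the identity on eigenvector triples. For mixed triples $(X,Y,Z)\in V_\la\times V_\mu\times V_\nu$ with $\la\neq\mu$, one can hope to extract strong constraints on the brackets $[V_\la,V_\mu]$, and ultimately to show that the components of $\na(\Ric)$ all vanish.

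To implement this program I would first handle the solvable case, exploiting the orthogonal decomposition $\g=\af\oplus\n$ where $\n$ is the nilradical and $\af$ its orthogonal complement, combined with explicit Ricci formulas for solvable metric Lie algebras. The unimodular versus non-unimodular dichotomy and the known rigidity of nilpotent metric Lie algebras together should produce enough structural identities to force $\na(\Ric)=0$. For the dimension $\leq 6$ case I would combine these algebraic consequences with the existing classifications, treating a semisimple summand (which in these dimensions is $\oo(3)$ or $\mathfrak{sl}(2,\R)$) separately and gluing it to the solvable radical via the Levi decomposition $\g=\s\ltimes\rf$. The general Lie group case would then follow by a uniform analysis of this Levi decomposition, applying the Codazzi identity to mixed triples drawn from $\s$ and $\rf$.

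The main obstacle is precisely this last step: controlling the coupling between a semisimple Levi subalgebra and the radical. The Codazzi equation is strictly weaker than parallelism, and without additional compatibility between the spectrum of $\Ric$ and the $\s$-module structure on $\rf$, it is not clear that the two constraints can be combined into full parallelism. Moreover, reducing a genuine homogeneous space $G/H$ with nontrivial isotropy to a left-invariant model on some other Lie group is itself a delicate step, and this is likely where genuinely new ideas will be needed beyond the solvable and low-dimensional cases that are the focus of the present paper.
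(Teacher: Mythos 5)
The statement you are asked to prove is labelled a \emph{conjecture} in the paper, and the paper does not prove it: it only establishes the special cases of solvable Lie groups with left-invariant metrics, Lie groups of dimension $\leq 6$, and homogeneous spaces $G/K$ with $K$ a maximal compact subgroup (the last because such a space is isometric to a solvable Lie group with a left-invariant metric). Your proposal is likewise not a proof but a research program, and you say so yourself in the final paragraph. That honesty is appropriate, but it means there is a genuine gap: the passage from the solvable and low-dimensional cases to a general homogeneous manifold is exactly the open part of the problem. Two of your reduction steps are actually obstructions rather than reductions. First, a general homogeneous Riemannian manifold $G/H$ with nontrivial isotropy need not be isometric to any Lie group with a left-invariant metric, so the problem does not become purely algebraic in the way you describe. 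Second, even in the left-invariant setting, a ``uniform analysis of the Levi decomposition $\g=\s\ltimes\rf$'' is precisely what nobody knows how to do: the paper's dimension-$6$ argument already requires eliminating, by brute-force computation, candidate structures in which a three-dimensional simple eigenspace couples to one-dimensional eigenspaces, and nothing in your outline indicates how to control such couplings in general.

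For the cases the paper does prove, your sketch is in the right spirit (decompose $\g$ into Ricci eigenspaces, evaluate the Codazzi identity on eigenvector triples) but misses the two ingredients that make the solvable case work. The decisive step is not the unimodular/non-unimodular dichotomy on the nilradical but a generalization of Lauret's theorem: if $\Ric_{|[\g,\g]}=c\,\mathrm{Id}$ then $[\g,\g]^{\perp}$ is abelian, proved via the GIT stratification of the variety of nilpotent brackets. This is applied to each eigenspace subalgebra $\g_i$ (which Theorem \ref{prel} shows is a subalgebra on which, by Proposition \ref{pr}, the intrinsic Ricci curvature is $\la_i$ on $[\g_i,\g_i]$ and its relevant complement). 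The second ingredient is the additivity of scalar curvature $\s=\s_1+\cdots+\s_r$ combined with Milnor's theorem that a non-flat solvable metric Lie algebra has negative scalar curvature; a sign count on the eigenvalues then forces all the cross-terms $\langle \g_i,[\g_j,\g_k]\rangle$ to vanish, which is exactly the non-parallel part of $\na(\Ric)$. Your proposed route through ``explicit Ricci formulas for solvable metric Lie algebras'' does not by itself yield either of these, so even the solvable case would not follow from the plan as written.
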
 
This conjecture is true in dimension four (see \cite{spiro}) and when $M$ is a sphere or a projective space (see \cite{peng}). It was proven in \cite{atri} for nilpotent Lie groups with left invariant metrics. Moreover, any conformally flat Riemannian manifold with constant scalar curvature  satisfies \eqref{eq} (see \cite[Theorem 5.1]{gray}) and any homogeneous conformally flat Riemannian manifold has Ricci-parallel curvature (see \cite{hito}). The purpose of this paper is to show that this conjecture is true when $M$ is a solvable Lie group endowed with a left invariant metric  which implies that the conjecture is true for any homogeneous Riemannian manifold $G/K$ where $K$ is a maximal compact subgroup of $G$. We show also that the conjecture is true for any  Lie group of dimension $\leq 6$ endowed with a left invariant metric.

The paper is organized as follows. In Section \ref{section2}, we recall some basic properties of the Lie algebra associated to a Lie group endowed with a left invariant metric with harmonic curvature. In Section \ref{section3}, we prove  Conjecture \ref{conj} for a solvable Lie group. This is based on a generalization of the  main result of Lauret in \cite{lauret}. In Section \ref{section4}, we prove  Conjecture \ref{conj} for any Lie group of dimension $\leq6$. The proof for the dimensions $\leq5$ is quite easy but, for  dimension 6, it involves a huge computation where the use of Maple is needed.

\section{Preliminaries}\label{section2}
In this section, we establish the basic tools needed for the proof of our main results.
The results of this section were first proved in \cite{atri}.

The study of left invariant Riemannian metrics on Lie groups reduces to the study of  Lie algebras endowed with a Euclidean product.    Let $(\G,\br,\prs)$ be a Euclidean Lie algebra, i.e., a Lie algebra endowed with a bilinear nondegenerate form which is  positive definite. The \emph{ Levi-Civita product} of ${\G}$ is the bilinear map $\mathrm{L}:{\G}\times{\G}\too{\G}$ given by  Koszul's
formula
\begin{eqnarray}\label{levicivita}2\langle
\mathrm{L}_uv,w\rangle&=&\langle[u,v],w\rangle+\langle[w,u],v\rangle+
\langle[w,v],u\rangle \quad\mbox{for any}\; u,v,w\in\G.\end{eqnarray}
For any $u,v\in{\G}$, $\mathrm{L}_{u}:{\G}\too{\G}$ is skew-symmetric and $[u,v]=\mathrm{L}_{u}v-\mathrm{L}_{v}u$.
The curvature of ${\G}$ is given by
$$
\label{curvature}K(u,v)=\mathrm{L}_{[u,v]}-[\mathrm{L}_{u},\mathrm{L}_{v}] \quad\mbox{for any}\; u,v,\in\G.
$$
The Ricci curvature $\mathrm{ric}:{\G}\times{\G}\too\R$ and its Ricci operator $\Ric:{\G}\too{\G}$ are defined by $$\langle \Ric (u),v\rangle=\mathrm{ric}(u,v)=\mathrm{tr}\left(w\too
K(u,w)v\right) \quad\mbox{for any}\; u,v,w\in\G.$$ The operator $\Ric$ is symmetric and hence is diagonalizable. Thus there exists $\la_1<\ldots<\la_r$ such that 
\[ \G=\g_1\oplus\ldots\oplus\g_r \]where $\G_i$ is the eigenspace of $\Ric$ associated to the eigenvalues $\la_i$.

We call $(\G,\prs)$ a Euclidean Lie algebra with harmonic curvature if  $\Ric$ is a Codazzi operator, i.e., for any $u,v,w\in\G$,
\begin{equation}\label{codazzi}
\langle\mathrm{L}_uv,\Ric(w)\rangle+
\langle\mathrm{L}_uw,\Ric(v)\rangle=\langle\mathrm{L}_vu,\Ric(w)\rangle+
\langle\mathrm{L}_vw,\Ric(u)\rangle.
\end{equation}If we take $u\in\g_i$, $v\in\g_j$ and $w\in\g_k$,  we get
\begin{equation}\label{eq1}
(2\la_k-\la_i-\la_j)\langle [u,v],w\rangle=(\la_i-\la_j)(\langle[v,w],u\rangle+\langle[u,w],v\rangle).
\end{equation}
For $i=j\not=k$, we get that $\G_i$ is a subalgebra and if we take $k=i\not=j$, we get
\[ \langle [v,u],w\rangle+\langle[v,w],u\rangle=0\quad u,w\in\G_i, v\in\G_j, i\not=j. \]For any $i<j<k$ and $u_i\in\G_i,u_j\in\G_j,u_k\in\G_k$, we have
\begin{equation}\label{sy} \begin{cases}(2\la_i-\la_j-\la_k)\langle [u_j,u_k],u_i\rangle-(\la_j-\la_k)(\langle[u_k,u_i],u_j\rangle+
\langle[u_j,u_i],u_k\rangle)=0,\quad(E1)\\
(2\la_j-\la_i-\la_k)\langle [u_i,u_k],u_j\rangle-(\la_i-\la_k)(\langle[u_k,u_j],u_i\rangle+
\langle[u_i,u_j],u_k\rangle)=0,\quad(E2)\\
(2\la_k-\la_i-\la_j)\langle [u_i,u_j],u_k\rangle-(\la_i-\la_j)(\langle[u_j,u_k],u_i\rangle+
\langle[u_i,u_k],u_j\rangle)=0.\quad (E3)
\end{cases} \end{equation}We have
\[ (E1)-(E2)+(E3)=0 \]and if we put
\[ x=\langle[u_k,u_j],u_i\rangle,\; y=\langle[u_k,u_i],u_j\rangle\esp z=\langle[u_i,u_j],u_k\rangle, \]then the system \eqref{sy} becomes
\[ \begin{cases}(\la_j+\la_k-2\la_i)x+(\la_k-\la_j)y=(\la_k-\la_j)z,\\
(\la_k-\la_i)x+(\la_i+\la_k-2\la_j)y=(\la_i-\la_k)z.
\end{cases} \]So
\[ x=-{\frac { \left( \la_{{j}}-\la_{{k}} \right) ^{2}z}{ \left( \la_{
			{i}}-\la_{{j}} \right) ^{2}}}\esp y={\frac { \left( \la_{{i}}-\la_{{k}}
		\right) ^{2}z}{ \left( \la_{{i}}-\la_{{j}} \right) ^{2}}}. 
\]Thus
\[ (\la_i-\la_j)^2\langle[u_j,u_k],u_i\rangle+\left( \la_{{j}}-\la_{{k}} \right) ^{2}
\langle[u_j,u_i],u_k\rangle=0\esp
\left( \la_{{i}}-\la_{{j}} \right) ^{2}\langle[u_i,u_k],u_j\rangle+
\left( \la_{{i}}-\la_{{k}}\right) ^{2}\langle[u_i,u_j],u_k\rangle=0. \]
So far, we proved the following theorem.
\begin{theo}\label{prel} Let $(\G,\br,\prs)$ be a Euclidean Lie algebra, $\la_1<\ldots<\la_r$ the eigenvalues of $\;\Ric$ and  $\G=\G_1\oplus\ldots\oplus\G_r$ is the splitting of $\G$ on eigenspaces of $\Ric$. Then $\Ric$ is a Codazzi tensor if and only if:
	\begin{enumerate}\item for any $i\in\{1,\ldots,r\}$, $\G_i$ is a subalgebra of $\G$,
		\item for any $i,j\in \{1,\ldots,r\}$ with $i\not=j$ and for any $u,v\in\G_i$ and $w\in\G_j$ then
		\begin{equation}\label{sk} \langle [w,u],v\rangle+\langle [w,v],u\rangle=0, \end{equation}
		\item for any $i<j<k$ for any $u\in\G_i,v\in\G_j,w\in\G_k$ then
		\begin{equation}\label{3} \left(\la_i-\la_j\right)^2\langle[v,w],u\rangle+\left( \la_{j}-\la_{k} \right)^{2}
		\langle[v,u],w\rangle=0\esp
		\left( \la_{i}-\la_{j} \right)^{2}\langle[u,w],v\rangle+
		\left( \la_{i}-\la_{k}\right)^{2}\langle[u,v],w\rangle=0. \end{equation}
	\end{enumerate}In this case, $\Ric$ is non parallel if and only if there exists a triple $(i,j,k)$ with $i<j<k$ and $u_0\in\G_i$, $v_0\in\G_j$ and $w_0\in\G_k$ such that
	$\langle [u_0,v_0],w_0\rangle\not=0$. 
\end{theo}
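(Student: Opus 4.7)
The theorem has two independent parts: the equivalence of the Codazzi condition with (1)--(3), and the characterisation of non-parallelism of $\Ric$. The ``only if'' direction of the equivalence has essentially been carried out above the theorem: specialising \eqref{eq1} to $i=j\ne k$ forces $\G_i$ to be a subalgebra (condition (1)), to $k=i\ne j$ yields the skew-symmetry \eqref{sk} (condition (2)), and to pairwise distinct $(i,j,k)$ it produces the system $(E1)$--$(E3)$ whose linear-algebraic reduction is \eqref{3}. So only two things remain: the converse of the equivalence, and the non-parallel characterisation.

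For the converse, since \eqref{codazzi} is trilinear it suffices to check \eqref{eq1} for $u\in\G_i, v\in\G_j, w\in\G_k$ in every index pattern. The case $i=j=k$ is trivial; the case $i=j\ne k$ reduces to $\langle[u,v],w\rangle=0$ on the left (immediate from (1) since distinct eigenspaces of the symmetric operator $\Ric$ are orthogonal) and a vanishing coefficient $\la_i-\la_j=0$ on the right; the cases $i=k\ne j$ and $j=k\ne i$ reduce, after (1) annihilates one Koszul term on the right of \eqref{eq1}, to exactly the identity (2); and the all-distinct case is handled by reversing the linear-algebraic reduction done in the excerpt: (3) together with the automatic dependence $(E1)-(E2)+(E3)=0$ delivers all three of $(E1), (E2), (E3)$, hence \eqref{eq1} for every ordering of the triple.

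For the non-parallel characterisation I would use that $\na(\Ric)=0$ is equivalent to $\mathrm{L}_u$ commuting with $\Ric$ for every $u\in\G$, equivalently $\mathrm{L}_u(\G_m)\subseteq\G_m$ for every $u$ and $m$. To test this I expand
\[
2\langle\mathrm{L}_uv,w\rangle=\langle[u,v],w\rangle+\langle[w,u],v\rangle+\langle[w,v],u\rangle
\]
via Koszul for $u\in\G_a, v\in\G_m, w\in\G_k$ with $k\ne m$, splitting on whether $a=m$, $a=k$, or $a,m,k$ are pairwise distinct. The first two sub-cases yield $\langle\mathrm{L}_uv,w\rangle=0$ directly from (1) and (2). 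In the all-distinct sub-case the three Koszul terms are cross-brackets across $\{a,m,k\}$ and, by (3), are simultaneous scalar multiples of $z:=\langle[u_i,u_j],u_k\rangle$ (where $i<j<k$ orders $\{a,m,k\}$), so they all vanish iff $z$ does. This proves by contraposition that $\Ric$ is parallel iff every such $z$ vanishes. The explicit converse direction is witnessed by
\[
\langle\mathrm{L}_{w_0}v_0,u_0\rangle=\frac{(\la_i-\la_k)(\la_k-\la_j)}{(\la_i-\la_j)^2}\,z\ne0
\]
for $u_0\in\G_i, v_0\in\G_j, w_0\in\G_k$ with $i<j<k$ and $z\ne0$, obtained by substituting the formulas for $x$ and $y$ from (3) into Koszul; this displays a nonzero $\G_i$-component of $\mathrm{L}_{w_0}v_0$, precluding parallelism.

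The main obstacle is purely combinatorial book-keeping: the case analyses proliferate and one must keep straight the symmetric roles of the three eigenspaces in (3) and the Koszul sign conventions. The only nontrivial algebraic input is the identity $(\la_i-\la_j)^2-(\la_i-\la_k)^2-(\la_j-\la_k)^2=2(\la_i-\la_k)(\la_k-\la_j)$, which simplifies the coefficient in the last display.
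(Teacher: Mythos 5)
Your proposal is correct and follows essentially the same route as the paper: the paper's proof of Theorem \ref{prel} is precisely the computation preceding its statement (specialising \eqref{eq1} to the index patterns $i=j\neq k$, $k=i\neq j$ and $i<j<k$, then solving the system \eqref{sy}), and your converse simply observes that each of these reductions is reversible by trilinearity. You also supply the two points the paper leaves implicit — the reverse implication and the non-parallel characterisation via $\na(\Ric)=0\Longleftrightarrow[\mathrm{L}_u,\Ric]=0$ — and your witness $\langle\mathrm{L}_{w_0}v_0,u_0\rangle=\frac{(\la_i-\la_k)(\la_k-\la_j)}{(\la_i-\la_j)^2}\langle[u_0,v_0],w_0\rangle$ together with the coefficient identity checks out.
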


Let us give an interpretation of  assertion 3. in the theorem. For any $u\in\G$, denote by $u_i$ its component in $\G_i$. Fix an index $k$ and define a new product $\prs_k$ on $\G$ by
\[ \langle u,v\rangle_k=\sum_{j=1}^r(\de_{jk}+(\la_j-\la_k)^2)\langle u_j,v_j\rangle,\quad u,v\in\G. \]Denote by $\G_k^\perp$ the orthogonal of $\G_k$.  It is easy to check that, for any $u_k\in\G_k$ and for any $u,v\in\G$,
\begin{equation}
\langle [u_k,v],w\rangle_k+\langle[u_k,w],v\rangle_k=0
\end{equation} and \begin{equation}\label{rho}\rho_k:\G_k\too \mathrm{so}(\G_k^\perp,\prs_k)\quad\mbox{where}\quad \rho_k(u_k)(v)=\sum_{l\not=k}[u_k,v]_l,\end{equation} is a representation. The proof of the following proposition is given  in \cite{atri}.
\begin{pr} Let $(\G,\br,\prs)$ be a Euclidean Lie algebra whose Ricci operator is Codazzi. With the notations above, for any $i=1,\ldots,r$, let $\{V_{i,\al}\}$ be an orthonormal basis of $\G_i$. Then, for any $i=1,\ldots,r$, $u_i\in\G_i$,
	\begin{equation}\label{r}
	\ric(u_i,u_i)=\ric_i(u_i,u_i)-\sum_{j\not=k}\sum_{\al,\be}
	\frac{(\la_k-\la_i)(\la_j-\la_i)}{(\la_k-\la_j)^2}
	\langle[V_{j,\al},V_{k,\be}],u_i\rangle^2,
	\end{equation}where $\ric_i$ is the Ricci curvature of $\G_i$ endowed with the restriction of $\prs$. Moreover, the scalar curvature $\s$ of $\G$ satisfies
	\begin{equation}\label{s}
	\s=\s_1+\ldots+\s_r,
	\end{equation}where $\s_i$ is the scalar curvature of $\G_i$.
	
	\end{pr}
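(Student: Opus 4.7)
The plan is to split the defining trace $\ric(u_i,u_i)=\sum_{l,\ga}\langle K(u_i,V_{l,\ga})u_i,V_{l,\ga}\rangle$ along the eigenspace decomposition $\G=\G_1\oplus\cdots\oplus\G_r$. The key preliminary observation is that, for any $u_i,v_i\in\G_i$, the Levi-Civita product $L_{u_i}v_i$ lies in $\G_i$: Koszul's formula \eqref{levicivita} applied with $w\in\G_j$, $j\neq i$, yields
\[ 2\langle L_{u_i}v_i,w\rangle = \langle[u_i,v_i],w\rangle + \langle[w,u_i],v_i\rangle + \langle[w,v_i],u_i\rangle, \]
whose first term vanishes because $\G_i$ is a subalgebra (first item of Theorem \ref{prel}) and whose last two cancel by \eqref{sk}. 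Consequently, the Levi-Civita product of the Euclidean subalgebra $(\G_i,\prs|_{\G_i})$ coincides with the restriction of $L$, so the portion of the trace in which the basis vector lies in $\G_i$ reproduces $\ric_i(u_i,u_i)$, accounting for the first summand in \eqref{r}.

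For the cross terms $\sum_{l\neq i,\ga}\langle K(u_i,V_{l,\ga})u_i,V_{l,\ga}\rangle$, I would expand $K(u_i,V_{l,\ga})=L_{[u_i,V_{l,\ga}]}-[L_{u_i},L_{V_{l,\ga}}]$; the piece $\langle L_{V_{l,\ga}}L_{u_i}u_i,V_{l,\ga}\rangle$ vanishes by skew-symmetry of $L_{V_{l,\ga}}$, since $L_{u_i}u_i\in\G_i$. To handle the remaining pieces I would compute $L_{u_i}V_{l,\ga}$ via Koszul combined with the three-index Codazzi relations \eqref{3}; a short case analysis on the ordering of $(i,l,m)$ for any basis vector $V_{m,\eta}$ collapses the three subcases to the uniform identity
\[ \langle L_{u_i}V_{l,\ga},V_{m,\eta}\rangle = \frac{\la_i-\la_m}{\la_i-\la_l}\langle[u_i,V_{l,\ga}],V_{m,\eta}\rangle\quad(m\neq i),\qquad \langle L_{u_i}V_{l,\ga},V_{i,\eta}\rangle=0, \]
and $L_{V_{l,\ga}}u_i$ is then read off from $[u_i,V_{l,\ga}]=L_{u_i}V_{l,\ga}-L_{V_{l,\ga}}u_i$. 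Substituting these expressions back, the $\G_i$- and $\G_l$-components of $[u_i,V_{l,\ga}]$ contribute zero while the $\G_m$-components ($m\neq i,l$) assemble into a single sum. Converting each bracket $\langle[u_i,V_{l,\ga}],V_{m,\eta}\rangle$ to the canonical form $\langle[V_{j,\al},V_{k,\be}],u_i\rangle$ via another application of \eqref{3} produces exactly the second term in \eqref{r}.

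For the scalar curvature formula \eqref{s}, I would sum \eqref{r} over the full orthonormal basis $\{V_{i,\al}\}_{i,\al}$. The diagonal contributions yield $\s_1+\cdots+\s_r$ directly. For the cross contributions, the key point is that for each triple of distinct indices $\{p,q,r\}$ the relations \eqref{3} express the three brackets $\langle[V_{q,\be},V_{r,\ga}],V_{p,\al}\rangle$, $\langle[V_{p,\al},V_{r,\ga}],V_{q,\be}\rangle$, $\langle[V_{p,\al},V_{q,\be}],V_{r,\ga}\rangle$ as common multiples of a single canonical quantity. Collecting the three contributions corresponding to the three choices of $i\in\{p,q,r\}$, the coefficient of this canonical quantity reduces to a closed-form expression in the three eigenvalues that vanishes upon expansion, so the cross terms cancel identically.

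The hard part will be the bookkeeping in the second paragraph: each of the three subcases of the index ordering in \eqref{3} must be handled separately, and the resulting bracket expressions re-indexed into the symmetric form required by \eqref{r}. The algebraic identity underlying the cancellation in \eqref{s} is elementary in comparison, but it is sensitive to the exact form of the coefficients and therefore serves as a useful consistency check on the computation.
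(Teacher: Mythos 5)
Your proposal is correct, but it cannot be compared step-by-step with the paper's argument because the paper does not prove this proposition at all: it states that the proof is given in d'Atri's article \cite{atri}. What you supply is therefore a self-contained derivation where the authors defer to a reference, and I have checked that it goes through. The two pillars of your computation are both valid: (i) the lemma $L_{u_i}v_i\in\G_i$, which makes each $\G_i$ totally geodesic so that the $\G_i$-part of the trace is exactly $\ric_i(u_i,u_i)$; and (ii) the uniform identity $\langle L_{u_i}V_{l,\ga},V_{m,\eta}\rangle=\frac{\la_i-\la_m}{\la_i-\la_l}\langle[u_i,V_{l,\ga}],V_{m,\eta}\rangle$ for $m\neq i$ together with $\langle L_{u_i}V_{l,\ga},V_{i,\eta}\rangle=0$, both of which I verified against \eqref{sk} and \eqref{3}. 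Carrying your plan to the end, the two surviving pieces $\langle L_{[u_i,V_{l,\ga}]}u_i,V_{l,\ga}\rangle$ and $\langle L_{V_{l,\ga}}u_i,L_{u_i}V_{l,\ga}\rangle$ each contribute $\sum_{m\neq i,l}\sum_\eta\frac{(\la_i-\la_m)(\la_l-\la_m)}{(\la_i-\la_l)^2}\langle[u_i,V_{l,\ga}],V_{m,\eta}\rangle^2$, and after converting to $\langle[V_{l,\ga},V_{m,\eta}],u_i\rangle^2$ and symmetrizing over the ordered pair $(l,m)$ one lands exactly on the second term of \eqref{r}; the cancellation needed for \eqref{s} reduces to $ab(a+b)\bigl(b-(a+b)+a\bigr)=0$ with $a=\la_p-\la_q$, $b=\la_q-\la_s$, as you predicted. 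Two details to make explicit when writing this up: the vanishing of $\langle L_{V_{l,\ga}}L_{u_i}u_i,V_{l,\ga}\rangle$ uses not only skew-symmetry of $L_{V_{l,\ga}}$ but also $L_{V_{l,\ga}}V_{l,\ga}\in\G_l$ and the orthogonality of distinct eigenspaces; and since the relations \eqref{3} are stated only for $i<j<k$, you should first recast them in the permutation-symmetric form $\langle[u_a,u_b],u_c\rangle=\mathrm{sgn}(\sigma)\,(\la_a-\la_b)^2\,T$ (for $\{a,b,c\}$ a fixed triple of distinct indices and $T$ independent of the ordering) before invoking them for arbitrary orderings, which is exactly what your ``short case analysis'' amounts to.
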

	
	\section{Left invariant Riemannian metrics with harmonic curvature on solvable Lie groups are Ricci-parallel}\label{section3}
	
	The proof of our main result is based on the following theorem which is a generalization of the famous theorem of Lauret \cite{lauret} which shows that any solvable Einstein Euclidean Lie algebra is standard in the sense that the orthogonal of its derived ideal is abelian. Actually, the proof of Lauret's theorem is still valid if we suppose that $\Ric_{|[\G,\G]}=c\mathrm{Id}_{[\G,\G]}$ instead of $\Ric=c\mathrm{Id}_\G$. To convince the reader we copy Lauret's proof and we give the minor modifications needed to adapt to our hypothesis.
	
	\begin{theo}\label{l} Let $(\G,\br,\prs)$ be a solvable Lie algebra such that there exists a constant $c$ such that $\Ric_{|[\G,\G]}=c\mathrm{Id}_{[\G,\G]}$. Then $\G$ is standard, i.e., $[\G,\G]^\perp$ is abelian.
		
		\end{theo}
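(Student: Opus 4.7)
The plan is to copy Lauret's proof of his theorem from \cite{lauret} and carefully check that each invocation of the Einstein condition can be weakened to the hypothesis $\Ric_{|[\G,\G]} = c\,\mathrm{Id}_{[\G,\G]}$. First I would fix notation, setting $\n = [\G,\G]$ and $\af = \n^\perp$, so that $\G = \af \oplus \n$ is an orthogonal decomposition. Since $\G$ is solvable, $\n$ is nilpotent, and for every $X \in \G$ the image of $\ad X$ is contained in $\n$, so $\n$ is preserved by every inner derivation.

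Next I would introduce the mean curvature vector $H \in \af$ defined by $\langle H, X \rangle = \mathrm{tr}(\ad X)$, and for $X \in \af$ write $\ad X|_\n = S_X + A_X$ with $S_X$ symmetric and $A_X$ skew-symmetric. Lauret's argument produces an auxiliary symmetric operator, built from these $S_X, A_X$ together with a Killing-type form on $\af$, whose trace must vanish as a consequence of the Einstein condition; the positivity-forced vanishing then yields $[X,Y] = 0$ for all $X, Y \in \af$.

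The key point making our weaker hypothesis sufficient is the following. Throughout Lauret's argument the Einstein condition enters only through trace identities of the form $\mathrm{tr}(\Ric \circ B) = c\,\mathrm{tr}(B)$ for linear operators $B: \G \to \G$ constructed from $\ad$-operators, their transposes, and compositions thereof. Since $\mathrm{Im}(\ad X) \subset [\G,\G] = \n$ for every $X$, the image of every such $B$ lies in $\n$. For operators $B$ with $B(\G) \subset \n$, the identity $\mathrm{tr}(\Ric \circ B) = c\,\mathrm{tr}(B)$ depends only on the restriction $\Ric_{|\n}$, so the weaker hypothesis $\Ric_{|\n} = c\,\mathrm{Id}_\n$ suffices.

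The main obstacle is thus a routine but non-trivial bookkeeping task: stepping through Lauret's proof and verifying at each place where the Einstein condition is applied that the corresponding operator maps $\G$ into $\n$. Once this is confirmed, the conclusion $[\af, \af] = 0$, and hence that $\af = [\G,\G]^\perp$ is abelian, follows exactly as in Lauret's original argument.
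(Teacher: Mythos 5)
Your proposal is correct and follows essentially the same route as the paper, which itself states that it simply copies Lauret's proof and records the minor modifications; the key observation you isolate --- that the Einstein hypothesis enters Lauret's argument only through identities $\tr(\Ric\circ E)=c\,\tr(E)$ for operators $E$ with image in $\n=[\G,\G]$ (in the paper, $E=\ad_H$ and the operator $E_{|\af}=0$, $E_{|\n}=\be+|\be|^2\mathrm{Id}_\n$ from the stratification argument) --- is exactly the mechanism the paper exploits, together with the immediate fact that $\Ric_{|\n}=c\,\mathrm{Id}_\n$ forces $\Ric$ to preserve $\n$ and $\af$.
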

		
		\begin{proof} Put $\n=[\G,\G]$ and $\af=[\G,\G]^\perp$. We have
			\[ \Ric=R-\frac12B-S(\ad_H) \]where $S(\ad_H)=\frac12(\ad_H+\ad_H^*)$, $\langle H,u\rangle=\tr(\ad_u)$, $\langle Bu,v\rangle=\tr(\ad_u\circ\ad_v)$ and
			\begin{equation} \begin{cases}\di\langle Ru,v\rangle=-\frac12\sum_{i,j}\langle[u,e_i],e_j\rangle
			\langle[v,e_i],e_j\rangle+\frac14\sum_{i,j}\langle[e_i,e_j],u\rangle
			\langle[e_i,e_j],v\rangle, \\\di
			\tr(\Ric+\frac12B+S(\ad_H))E=\frac14\sum_{i,j}\langle E([e_i,e_j])-[E(e_i),e_j]-[e_i,E(e_j)],[e_i,e_j]\rangle,\end{cases}
			\label{eq4}\end{equation}for any endomorphism $E$ of $\G$ and where $(e_1,\ldots,e_n)$ is an orthonormal basis of $\G$. Since $\Ric$ leaves $\n$ invariant it leaves $\af$ invariant  and
			for any, $s\in\af$, 
			\begin{align}\label{k}
			\ric(s,s)&=-\frac12\sum_{i,j}\langle[s,e_i],e_j\rangle^2-\frac12\tr(\ad_s^2)=-\frac12\tr(\ad_s\circ\ad_s^*)-\frac12\tr(\ad_s^2)
			=-\frac14\tr((\ad_s+\ad_s^*)^2)\leq0
			\end{align}and hence $\ric(u,u)\leq0$ for any $u\in\G$. If $\G$ is unimodular then $\G$ is standard according \cite[Theorem 1]{dotti}. We suppose  that $\G$ is not unimodular and
				we choose $(u_1,\ldots,u_p)$ and orthonormal basis of $\n$ and $(v_1,\ldots,v_q)$ and orthonormal of $\af$.

			 We have $\tr(\ad_H)=|H|^2>0$,  $\tr(S(\ad_H)\ad_H)=\tr(S(\ad_H)^2)$ and since $\ad_H$ is a derivation, $\tr(B\circ\ad_H)=0$.
			Moreover,
			\[ \tr(\Ric\circ\ad_H)=\sum_{i}\langle [H,u_i],\Ric(u_i)\rangle+\sum_{j}\langle [H,v_j],\Ric(v_j)\rangle=c\sum_{i}\langle [H,u_i],u_i\rangle=c\tr(\ad_H). \]
			So, by taking $E=\ad_H$ in \eqref{eq4}, we get
			\begin{equation}\label{c} c=-\frac{\tr(S(\ad_H)^2)}{\tr(S(\ad_H))}. \end{equation}
			We identify $\n=[\G,\G]$ to $\R^p$ via $(u_1,\ldots,u_p)$ and we set $\mu=\br_{|\n\times\n}$. In this way,  $\mu$ can be
viewed as an element of $\mathscr{N}\subset V_p$. If $\mu\not=0$ then $\mu$ lies in a unique stratum $\mathscr{S}_{\be}$, $\be\in\mathscr{B}$, by \cite[Theorem 2.10, $(i)$]{lauret}. The argument used in the proof of Lauret's theorem to show that one can assume $\mu\in Y^{ss}_\be=\mathscr{S}_\be\cap W_\be$ can be used in our case, so we make the assumption. 

We apply \eqref{eq4} to the endomorphism $E$ given by $E_{|\af}=0$ and $E_{|\n}=\be+|\be|^2\mathrm{Id}_\n$. The right hand of the second equation in $\eqref{eq4}$ becomes
\begin{align*}
&\frac14\sum_{i,j}\langle E([u_i,u_j])-[E(u_i),u_j]-[u_i,E(u_j)],[u_i,u_j]\rangle
+\frac14\sum_{r,s}\langle E([v_r,v_s]),[v_r,v_s]\rangle\\
&+\frac12\sum_{i,r}\langle E([u_i,v_r]),[u_i,v_r]\rangle
-\frac12\sum_{r,i}\langle [v_r,E(u_i)],[v_r,u_i]\rangle,
\end{align*}which in turn equal to
\begin{align}\label{es}
&\frac12\langle\pi(\be+|\be|^2\mathrm{Id}_{\n})\mu,\mu\rangle+
\frac14\sum_{r,s}\langle(\be+|\be|^2\mathrm{Id}_{\n})[v_r,v_s],[v_r,v_s]\rangle
+\frac12\sum_{i,r}\langle(\be\ad_{v_r}-\ad_{v_r}\be)(u_i),\ad_{v_r}(u_i)\rangle.
\end{align}The first and second terms in \eqref{es} are  $\geq 0$ by \cite[Lemma 2.16, Lemma 2.17]{lauret}  and the last one equals $\frac12\langle[\be,\ad_{v_r}],\ad_{v_r}\rangle$ which is  $\geq 0$ by \cite[(10)]{lauret}.

Therefore, since $\Ric_{|\n}=c\mathrm{Id}_{\n}$ and $E_{|\af}=0$, we obtain from \eqref{eq4} and \eqref{c}
\begin{equation}\label{e}
-\frac{\tr(S(\ad_H)^2)}{\tr(S(\ad_H))}\tr(E)+\tr(S(\ad_H))E\geq0.
\end{equation}But $\tr(\be)=-1$ and so
\begin{equation}\label{ed} \tr(E^2)=|\be|^2(-1+p|\be|^2)=|\be|^2\tr(E). \end{equation}
	On the other hand,	we have
	\begin{equation}\label{ef} \tr(S(\ad_H)E)=\tr((\ad_H)_{|\n}(\be+|\be|^2\mathrm{Id_\n})=|\be|^2\tr(S(\ad_H)), \end{equation}by \cite[Lemma 2.15]{lauret}.	We use now \eqref{e}, \eqref{ed} and \eqref{ef} to obtain
	\begin{equation}\label{eg}\tr(S(\ad_H)^2)\tr(E^2)\leq (\tr(S(\ad_H)E)^2,
\end{equation}	a "backward" Cauchy-Schwartz inequality. This turns all inequalities mentioned after \eqref{e} into equalities, in particular the second term:
\[ \frac14\sum_{r,s}\langle(\be+|\be|^2\mathrm{Id}_{\n})[v_r,v_s],[v_r,v_s]\rangle=0. \]	We therefore get that $[\af,\af]=0$ since $\be+|\be|^2\mathrm{Id}_{\n}$ is positive definite by \cite[Lemma 2.17]{lauret}.

It only remains to consider the case   $\mu=0$. Here we argue in the same way but with $E$ chosen as $E_{|\n}=\mathrm{Id}_\n$ and $E_{|\af}=0$. It then follows from \eqref{eq4} that
\[ \frac14\sum_{r,s} |[v_r,v_s]|^2=-\frac{\tr(S(\ad_H)^2)}{\tr(S(\ad_H))}p+\tr(S(\ad_H))= 
\frac{\tr(S(\ad_H)^2)}{\tr(S(\ad_H))}\left( \frac{(\tr(S(\ad_H)))^2}{\tr(S(\ad_H)^2)}-p\right)\leq0      \]
and thus $[\n^\perp,\n^\perp]=0$. This concludes the proof of the theorem.
	\end{proof}

Let $(\G,\prs)$ be a Euclidean Lie algebra with harmonic curvature. According to Theorem \ref{prel},	 $\G=\G_1\oplus\ldots\oplus\G_r$, for any $i\in\{1,\ldots,r\}$, $\G_i$ is a subalgebra of $\G$ and  the equations \eqref{sk} and \eqref{3} hold. For any $u\in\G$, we denote by $u_i$ its component in $\G_i$. For any $i=1,\ldots,r$, we consider 
	\begin{equation}\label{h}\p_i=\sum_{j\not=i,k\not=i,j\not=k}[\G_j,\G_k]_i\esp \h_i=\p_i^\perp\cap\G_i.\end{equation}
	\begin{pr}\label{pr}$\h_i$ is a subalgebra which contains $[\rf_i,\rf_i]$ where $\rf_i$ is the radical of $\G_i$. Moreover, $[[\rf_i,\rf_i],\G]\subset\G_i$ and for any $u\in\h_i$ and any $v\in\G_i$
		\[ \ric_i(u,v)=\la_i\langle u,v\rangle. \]
		
	\end{pr}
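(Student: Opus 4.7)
The plan is to prove the four assertions in the order (iv), (i), (iii), (ii), since each unlocks the next. For (iv), I polarize \eqref{r}; both $\ric$ and $\ric_i$ being symmetric bilinear, this gives, for $u,v\in\G_i$,
\[ \ric(u,v)=\ric_i(u,v)-\sum_{j\neq k,\alpha,\beta}\frac{(\la_k-\la_i)(\la_j-\la_i)}{(\la_k-\la_j)^2}\langle[V_{j,\alpha},V_{k,\beta}],u\rangle\langle[V_{j,\alpha},V_{k,\beta}],v\rangle. \]
Terms with $j=i$ or $k=i$ carry zero numerator, so the sum effectively runs over $j,k\neq i$; for $u\in\h_i=\p_i^\perp\cap\G_i$ each such pairing vanishes by definition of $\h_i$. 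The correction drops out, and combining with $\Ric(u)=\la_i u$ yields $\ric_i(u,v)=\la_i\langle u,v\rangle$.

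For (i), I would first establish the auxiliary claim that $[u,X_j]\in\G_i\oplus\G_j$ whenever $u\in\h_i$ and $X_j\in\G_j$ with $j\neq i$. Indeed, for any $Y_k\in\G_k$ with $k\notin\{i,j\}$, the instance of \eqref{3} corresponding to the ordering of $i,j,k$ links $\langle[X_j,Y_k],u\rangle$ and $\langle[X_j,u],Y_k\rangle$ by a nonzero scalar; since $u\in\h_i$ kills the first, the second vanishes, forcing $[u,X_j]_k=0$. Then, for $u,v\in\h_i$, $[u,v]\in\G_i$, and to verify $[u,v]\in\h_i$ I apply \eqref{3} once more to reduce $\langle[u,v],[X_j,Y_k]\rangle=0$ to the condition that the $\G_k$-component of $[X_j,[u,v]]$ is zero; expanding by Jacobi and applying the auxiliary claim to each piece confines this bracket to $\G_i\oplus\G_j$, giving (i).

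The structural input for (iii) is the representation $\rho_i\colon\G_i\to\mathrm{so}(\G_i^\perp,\prs_i)$ of \eqref{rho}, already shown to be a Lie algebra homomorphism into the compact Lie algebra $\mathrm{so}(\G_i^\perp,\prs_i)$ (the modified inner product $\prs_i$ is positive definite, so this target is isomorphic to $\mathrm{so}(n)$). Invoking the classical fact that every solvable subalgebra of a compact Lie algebra is abelian (a compact connected solvable Lie group is a torus), the image $\rho_i(\rf_i)$ is abelian, hence $\rho_i([\rf_i,\rf_i])=[\rho_i(\rf_i),\rho_i(\rf_i)]=0$. Unwinding the definition of $\rho_i$ this reads $[a,X]\in\G_i$ for all $a\in[\rf_i,\rf_i]$ and $X\in\G$, which is precisely (iii). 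Finally, (ii) is a one-line consequence of (iii) and \eqref{3}: for $a\in[\rf_i,\rf_i]\subset\G_i$ and $X_j\in\G_j,\ Y_k\in\G_k$ with $j,k\neq i$, $j\neq k$, (iii) gives $[X_j,a]=-[a,X_j]\in\G_i$, so $\langle[X_j,a],Y_k\rangle=0$; the relevant form of \eqref{3} then forces $\langle a,[X_j,Y_k]\rangle=0$, so $a\in\h_i$. The hardest part I anticipate is spotting that compactness of $\mathrm{so}(\G_i^\perp,\prs_i)$ is the right hinge for (iii); once it is identified, both radical statements are short consequences of results already in the excerpt.
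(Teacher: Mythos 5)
Your proposal is correct and follows essentially the same route as the paper's proof: the characterization $u\in\h_i\Leftrightarrow[u,\G_j]\subset\G_i\oplus\G_j$ derived from \eqref{3}, the Jacobi identity for the subalgebra property, the representation $\rho_i$ into $\mathrm{so}(\G_i^\perp,\prs_i)$ together with ``solvable subalgebras of compact Lie algebras are abelian'' for the radical statements, and formula \eqref{r} for the Ricci identity. You merely spell out some steps (the polarization of \eqref{r}, the reordering of the four claims) that the paper leaves implicit.
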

	\begin{proof} By virtue of \eqref{3}, $u\in\h_i$ if and only if for any $j\not=i$, $[u,\G_j]\subset\G_i\oplus\G_j$. So if $u,v\in\h_i$ and $w\in\G_j$ with $j\not=i$ then
		\begin{align*}
		[[u,v],w]&=[u,[v,w]]-[v,[u,w]]\\
		&=[u,[v,w]_i]+[u,[v,w]_j]-[v,[u,w]_i]-[v,[u,w]_j]\subset\G_i\oplus\G_j,
		\end{align*}and hence $\h_i$ is a subalgebra.
		
		Let $\rf_i$ be the radical of $\G_i$ and  $\rho_i$ the representation of $\G_i$ given by \eqref{rho}.  Then $\rho_i(\rf_i)$ is a solvable subalgebra of $\mathrm{so}(\g_i^\perp)$ and hence it is abelian. So $\rho_i([\rf_i,\rf_i])=0$ and hence, for any $X\in[\rf_i,\rf_i]$, and any $j\not=i$, $[X,\G_j]\subset\G_i$ which shows that $
		[\rf_i,\rf_i]\subset\h_i$ and $[[\rf_i,\rf_i],\G]\subset\G_i$. The last assertion is a consequence of  \eqref{r}.
		\end{proof}	
		We can prove now our main theorem.
	\begin{theo}\label{main}Let $\G=\G_1\oplus\ldots\oplus\G_r$ be a Euclidean Lie algebra with harmonic curvature. If $\G_i$ is solvable  for any $i\in\{1,\ldots,e\}$ then $\G$ is Ricci-parallel. 
		\end{theo}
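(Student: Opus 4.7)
The plan is to apply the generalized Lauret theorem \ref{l} to each summand $\G_i$, with its hypotheses supplied by Proposition \ref{pr}, and then to use the resulting rigidity together with the Codazzi relations of Theorem \ref{prel} to rule out any obstruction triple to Ricci parallelism.

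Since $\G_i$ is solvable, its radical $\rf_i$ coincides with $\G_i$, so $[\rf_i,\rf_i]=[\G_i,\G_i]$. By Proposition \ref{pr}, $[\G_i,\G_i] \subseteq \h_i$ and $\ric_i(u,v)=\la_i\langle u,v\rangle$ for every $u \in \h_i$, $v \in \G_i$; thus the Ricci operator of the Euclidean Lie algebra $\G_i$ satisfies $\Ric_{\G_i}|_{[\G_i,\G_i]}=\la_i\mathrm{Id}$. Theorem \ref{l} applied to $\G_i$ with $c=\la_i$ then yields that $\af_i:=[\G_i,\G_i]^\perp\cap\G_i$ is abelian. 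Because $[\G_i,\G_i]\subseteq\h_i$, taking orthogonal complements inside $\G_i$ gives $\p_i=\h_i^\perp\cap\G_i\subseteq\af_i$, so the obstruction subspace $\p_i$ lies in the abelian subalgebra $\af_i$ and in particular $[\p_i,\p_i]=0$.

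Assume for contradiction that $\Ric$ is not parallel. By Theorem \ref{prel} there exist $i<j<k$ and $u\in\G_i$, $v\in\G_j$, $w\in\G_k$ with $\langle[u,v],w\rangle\neq 0$, and the Codazzi relations \eqref{3} force all three projections $[v,w]_i\in\p_i$, $[u,w]_j\in\p_j$, $[u,v]_k\in\p_k$ to be nonzero. From here I would combine the Jacobi identity applied to $(u,v,w)$ with the abelianness of $\af_i,\af_j,\af_k$, the skew-symmetry relation \eqref{sk} and the explicit formula \eqref{r} for $\ric_i$ to reach a contradiction; one then concludes $\p_i=0$ for every $i$, which by Theorem \ref{prel} is equivalent to $\Ric$ being parallel.

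The hard part will be this final step: a direct Jacobi computation on $(u,v,w)$ is complicated by bracket components lying in eigenspaces $\G_l$ with $l\notin\{i,j,k\}$, which are controlled only through further Codazzi relations. A natural way forward is to exploit the representation $\rho_i$ of \eqref{rho}, which is abelian since $\rho_i(\G_i)$ is a solvable subalgebra of $\mathrm{so}(\G_i^\perp,\prs_i)$; the resulting simultaneous block-diagonalization of the $\ad$-action of $\G_i$ on $\G_i^\perp$, combined for all $i$ with the proportionality of the structure constants $\langle[V_{j,\al},V_{k,\be}],V_{i,\ga}\rangle$ across the three index positions entailed by \eqref{3}, should force each $\p_i$ to vanish.
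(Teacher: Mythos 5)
Your first half is exactly the paper's: you use Proposition \ref{pr} to see that $\Ric_{\G_i}$ restricted to $[\G_i,\G_i]=[\rf_i,\rf_i]$ is $\la_i\mathrm{Id}$, apply the generalized Lauret theorem \ref{l} to conclude that $\af_i=[\G_i,\G_i]^\perp\cap\G_i$ is abelian, and observe $\p_i\subseteq\af_i$. Up to there everything is correct. But the decisive second half is missing: you only announce that a Jacobi-identity computation on an obstruction triple $(u,v,w)$, combined with \eqref{sk}, \eqref{r} and some simultaneous block-diagonalization, ``should'' force $\p_i=0$, and you yourself flag that the bracket components falling into eigenspaces $\G_l$ with $l\notin\{i,j,k\}$ are not under control. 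That is precisely where the argument has to be made, and the route you sketch is not the one that works in the paper; in particular, knowing only that $[\p_i,\p_i]=0$ is far too weak, and a purely local/algebraic computation around one triple does not obviously close.

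What the paper actually does at this point is a global sign argument on the scalar curvature, and it uses an ingredient absent from your plan: Milnor's theorem that a solvable Euclidean Lie algebra is either flat or has $\s<0$. Ordering $\la_1<\dots<\la_r$, formula \eqref{r} gives $\la_r\dim\G_r\le\s_r$, whence $\la_r\le0$ by Milnor; so \emph{all} eigenvalues are $\le0$. Next, one refines your abelian piece as $\af_i=\uf_i\oplus\kf_i$, where $\kf_i$ consists of the elements acting skew-symmetrically on $[\G_i,\G_i]$; a short Jacobi computation (this is where \eqref{sk} and $[[\G_i,\G_i],\G]\subset\G_i$ enter) shows $\p_i\subseteq\kf_i$, and then \eqref{k} together with the abelianness of $\af_i$ gives $\ric_i=\la_i\prs$ on $[\G_i,\G_i]\oplus\uf_i$ and $\ric_i=0$ on $\kf_i$, so $\s_i=\la_i(\dim[\G_i,\G_i]+\dim\uf_i)$. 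Comparing $\s=\sum_i\la_i\dim\G_i$ with $\s=\sum_i\s_i$ from \eqref{s} yields $\sum_{j\in S}\la_j\dim\kf_j+\sum_{j\notin S}\la_j\dim\G_j=0$ with every summand $\le0$, which forces $\kf_j=0$ (hence $\p_j=0$) whenever $\la_j<0$, the case $\la_r=0$ being finished by \eqref{3}. Without this scalar-curvature bookkeeping and the appeal to Milnor, your proposal does not reach the conclusion; as it stands it is an accurate setup followed by an unproven claim.
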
\begin{proof} We will show that for any  $\{i,j,k\}$ which are mutually different, $\langle \G_i,[\G_j,\G_k]\rangle=0$. This is equivalent to $\p_i=\{0\}$ for any $i\in\{1,\ldots,r\}$, where $\p_i$ is given by \eqref{h}.

		 Suppose that $\la_1<\ldots<\la_r$. From \eqref{r}, we deduce that for any $u\in\G_r$,
	\[ \la_r|u|^2\leq\ric_r(u,u) \]and hence $\la_r\dim\G_r\leq \s_r$, where $\s_r$ is the scalar curvature of $\G_r$. But $\G_r$ is solvable and, according to \cite[Theorem 3.1]{milnor}, $\s_r<0$ or $\G_r$ is flat, thus $\la_r\leq0$. 
	
	Denote by $S$ the set of $j$ such that  $\G_j$ is not flat and let $i\in S$. Then, according to Proposition \ref{pr}, $[[\G_i,\G_i],\G]\subset\G_i$, $[\G_i,\G_i]\subset\h_i$ and hence for any $u\in\G_i$ and $v\in[\G_i,\G_i]$ 
	\[ \ric_i(u,v)=\la_i\langle u,v\rangle. \] By virtue of Theorem \ref{l}, $[\G_i,\G_i]^\perp\cap\G_i$ is abelian. Write
	\[ \G_i=[\G_i,\G_i]\oplus\uf_i\oplus\kf_i\esp [\G_i,\G_i]^\perp\cap\G_i=\uf_i\stackrel{\perp}\oplus\kf_i, \]where 
	\[ \kf_i=\{u\in [\G_i,\G_i]^\perp\cap\G_i, (\ad_u)_{|[\G_i,\G_i]}\;\mbox{is skew-symmetric} \}. \]
	Let us show that $\p_i\subset \kf_i $.	Indeed, for any $u_j\in\G_j$, $u_k\in\G_k$, and $j\not=k$ both different from $i$, and for any $d\in[\G_i,\G_i]$, by using the fact that $[[\G_i,\G_i],\G]\subset\G_i$ and \eqref{sk}, we get
	\[ 0=\langle[[u_j,u_k],d],d\rangle+\langle[[u_k,d],u_j],d\rangle+\langle[[d,u_j],u_k],d\rangle= \langle[[u_j,u_k]_i,d],d\rangle.\]
	On the other hand, by virtue of \eqref{r} and since $\p_i\subset \kf_i$, $\uf_i\subset\h_i$ and for any $u\in\uf_i$,
	\[ \ric_i(u,u)=\la_i\langle u,u\rangle. \]
	Moreover, for any  $u\in\kf_i$, $\ad_u$ is skew-symmetric in restriction to $[\G_i,\G_i]$. But $\uf_i\oplus\kf_i$ is abelian so $\ad_u$ is skew-symmetric in restriction to $\G_i$ and, by virtue of \eqref{k}, $\ric_i(u,u)=0$. So far we have shown that
	\[ \ric_i(u,u)=\begin{cases} \la_i|u|^2\;\mbox{if}\; u\in[\G_i,\G_i]\oplus \uf_i,\\0\;\mbox{if}\; u\in\kf_i \end{cases} \]which implies that $\s_i=\la_i(\dim\uf_i+\dim[\G_i,\G_i])$.
	  Then the  formula \eqref{s} can be written
	\[ \s=\la_1\dim\G_1+\ldots+\la_r\dim\G_r=\sum_{j\in S}\la_j(\dim[\G_j,\G_j]+\dim\uf_j). \]
	 So $\di \sum_{j\in S}\la_j\dim\kf_j+\sum_{j\notin S}\la_j\dim\G_j=0$ and, since $\la_1<\ldots<\la_r\leq0$,
	\[ \sum_{j\in S}\la_j\dim\kf_j=\sum_{j\notin S}\la_j\dim\G_j=0.  \]
	If $\la_r<0$ then for any $j$, $\G_j$ is not flat and $\kf_j=\{0\}$ which implies $\p_j=\{0\}$ and the result follows. If $\la_r=0$ then for any $j\leq r-1$, $\G_j$ is not flat and $\kf_j=\p_j=\{0\}$. We will have also  $\p_r=\{0\}$ by virtue of \eqref{3} which completes the proof.
\end{proof}

\begin{co} Let $\G$ be a  solvable Euclidean Lie algebra with harmonic curvature. Then $\G$ is Ricci-parallel.
	
	\end{co}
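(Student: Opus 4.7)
The plan is to deduce the corollary as a direct consequence of Theorem \ref{main}. Since $\G$ carries a harmonic-curvature metric, Theorem \ref{prel} applies and produces the Ricci-eigenspace decomposition $\G=\G_1\oplus\ldots\oplus\G_r$ in which each $\G_i$ is a subalgebra of $\G$. The only thing I need in order to invoke Theorem \ref{main} is that every $\G_i$ be solvable.

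That step is immediate: solvability is inherited by subalgebras. Concretely, the derived series $\G_i^{(k)}$ satisfies $\G_i^{(k)}\subseteq\G^{(k)}$, and since $\G$ is solvable the right-hand side vanishes for $k$ large enough, forcing $\G_i^{(k)}=\{0\}$. Hence each $\G_i$ is a solvable Lie subalgebra.

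Applying Theorem \ref{main} to this decomposition yields that $\G$ is Ricci-parallel, which is exactly the content of the corollary. There is no real obstacle here — the whole analytic and structural work has already been carried out in Theorem \ref{l}, Proposition \ref{pr} and Theorem \ref{main}; the corollary is just the observation that the hypothesis of Theorem \ref{main} on the pieces $\G_i$ is automatically met when the ambient Lie algebra $\G$ is itself solvable.
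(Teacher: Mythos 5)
Your proof is correct and follows exactly the route the paper intends: Theorem \ref{prel} makes each Ricci eigenspace $\G_i$ a subalgebra, subalgebras of a solvable Lie algebra are solvable, and Theorem \ref{main} then applies. The paper leaves this deduction implicit, so your write-up simply supplies the (correct) one-line justification.
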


\begin{co}\label{co} Let $\G=\G_1\oplus\ldots\oplus\G_r$ be a Euclidean Lie algebra with harmonic curvature and
	not Ricci-parallel. Then there exists $i$ such that $\dim\G_i\geq3$.
\end{co}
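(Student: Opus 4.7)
The plan is to derive this corollary directly from Theorem \ref{main} by contraposition. Theorem \ref{main} asserts that if every eigenspace summand $\G_i$ of the Ricci operator is a solvable subalgebra, then the harmonic-curvature Euclidean Lie algebra $\G$ is Ricci-parallel. Hence if $\G$ has harmonic curvature but is not Ricci-parallel, there must exist at least one index $i$ for which $\G_i$ is non-solvable.

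The remaining step is to observe that any Lie algebra of dimension $\le 2$ is automatically solvable: a one-dimensional Lie algebra is abelian, while a two-dimensional Lie algebra is either abelian or isomorphic to the unique non-abelian two-dimensional Lie algebra, whose derived ideal is one-dimensional and thus abelian. In particular, the smallest possible dimension of a non-solvable Lie algebra is $3$, realised by $\mathfrak{sl}(2,\R)$.

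Combining these two observations, the index $i$ provided by the contrapositive of Theorem \ref{main} must satisfy $\dim \G_i \ge 3$, which is exactly the statement of the corollary. There is no real obstacle here; the proof is essentially a one-line consequence of Theorem \ref{main} together with the elementary classification of Lie algebras of dimension at most two. The only thing to be careful about is to invoke Theorem \ref{main} in its full form (i.e.\ requiring \emph{all} $\G_i$ to be solvable) so that the failure of Ricci-parallelism genuinely forces some $\G_i$ to be non-solvable rather than merely non-abelian.
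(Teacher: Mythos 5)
Your argument is correct and is precisely the intended one: the paper states this corollary without proof as an immediate consequence of Theorem \ref{main}, via the contrapositive together with the elementary fact that every Lie algebra of dimension at most two is solvable. Nothing is missing.
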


\begin{co} Let $G/K$ be a homogeneous Riemannian manifold with harmonic curvature and $K$ is a maximal compact subgroup of $G$. Then $G/K$ is Ricci-parallel.
	
	\end{co}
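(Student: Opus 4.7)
The plan is to reduce the statement to the solvable case proved earlier in this section. The key structural fact is that whenever $K$ is a maximal compact subgroup of a connected Lie group $G$, there exists a closed connected solvable subgroup $S\subset G$ acting simply transitively on $G/K$ by left translation. Granting this, the orbit map $s\mapsto s\cdot eK$ is a diffeomorphism $S\to G/K$, and any $G$-invariant metric on $G/K$ is a fortiori invariant under $S$; since the $S$-action on $G/K$ corresponds under this diffeomorphism to left translation of $S$ on itself, the pull-back metric on $S$ is left-invariant.

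Having harmonic curvature is an intrinsic metric condition, so this left-invariant metric on $S$ also has harmonic curvature. I would then invoke the preceding corollary, which states that any solvable Euclidean Lie algebra with harmonic curvature is Ricci-parallel, to conclude that the metric on $S$ is Ricci-parallel. Transporting this conclusion back through the isometry $S\cong G/K$ shows that $G/K$ itself is Ricci-parallel.

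The only delicate point I expect is the existence of such a simply transitive closed solvable subgroup $S\subset G$. This is immediate from the Iwasawa decomposition $G=KAN$, with $S=AN$, in the semisimple case, and extends to general connected Lie groups by combining the Iwasawa decomposition of the semisimple quotient $G/\mathrm{Rad}(G)$ with the radical $\mathrm{Rad}(G)$ and an appropriate lift back to $G$. Rather than reprove it, I would cite this as a standard consequence of the Iwasawa--Mostow--Malcev structure theory; everything else in the argument is a routine transfer along an isometry.
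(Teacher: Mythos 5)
Your proposal is correct and follows exactly the paper's argument: the paper's proof is the one-line observation that a maximal compact $K$ admits a solvable subgroup of $G$ acting simply transitively on $G/K$, so $G/K$ is isometric to a solvable Lie group with a left-invariant metric, and the solvable case applies. Your additional remarks on the Iwasawa--Malcev structure theory only flesh out the existence claim that the paper likewise takes as standard.
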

	
	\begin{proof} Since $K$ is a maximal compact subgroup of $G$, there exists a solvable subgroup of $G$ which acts simply transitively on $G/K$ and hence $G/K$ is isometric to a solvable Lie group with a left invariant metric.
		\end{proof}

\section{Left invariant Riemannian metrics with harmonic curvature on  Lie groups of dimension $\leq6$ are Ricci-parallel}\label{section4}

In this section, we prove that any left invariant Riemannian metric with harmonic curvature on a   Lie group of dimension $\leq6$ is Ricci-parallel. Through-out this section $\G=\G_1\oplus\ldots\oplus \G_r$ is a Euclidean Lie algebra with harmonic curvature and we will not make any assumption on the order of the eigenvalues unless it is mentioned.

\begin{theo}\label{th} Let $\G=\G_1\oplus\G_2\oplus\G_3$ be a Euclidean   Lie algebra with harmonic curvature and $\dim\G_1=\dim\G_2=1$. Then $\G$ is Ricci-parallel.
	
\end{theo}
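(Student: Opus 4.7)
The strategy is to show that the single bracket $c := [e_1, e_2]$ must vanish: with $r = 3$ and $\dim \G_1 = \dim \G_2 = 1$, the only triple $(i, j, k)$ to check in Theorem \ref{prel} is $(1, 2, 3)$, and $c = 0$ is precisely the condition that makes every $\langle[u_i, u_j], u_k\rangle$ vanish, hence $\Ric$ parallel.

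Normalize $\G_i = \R e_i$ with $|e_i| = 1$ for $i = 1, 2$, and abbreviate $a = (\la_1 - \la_3)^2/(\la_1 - \la_2)^2$, $b = (\la_2 - \la_3)^2/(\la_1 - \la_2)^2$. Equation \eqref{sk} forces the $\G_1$ and $\G_2$ components of $[e_1, e_2]$ to be zero, so $c \in \G_3$; the solution of the system \eqref{sy} displayed right below it, applied to each triple $(e_1, e_2, w)$ with $w \in \G_3$, yields
\[
[e_1, w] = A(w) - a\langle c, w\rangle e_2, \qquad [e_2, w] = B(w) - b\langle c, w\rangle e_1,
\]
with $A, B : \G_3 \to \G_3$ skew-symmetric (by \eqref{sk} applied with $u, v \in \G_3$ and $w = e_1$ or $e_2$).

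The heart of the proof is a direct Ricci computation. Assuming $c \neq 0$, for $u, v \in \G_3$ expand $\Ric_\G(u, v) = \langle R u, v\rangle - \tfrac12\langle B u, v\rangle - \langle S(\ad_H) u, v\rangle$ using \eqref{eq4}, decomposing every orthonormal-basis sum along $\R e_1 \oplus \R e_2 \oplus \G_3$. The contributions from $[e_1, u]$ and $[e_1, f_k]$ produce $A^2$-terms appearing with coefficient $+\tfrac12$ in the first half of $\langle Ru, v\rangle$ and $-\tfrac12$ in the second half; these cancel, and the same happens for $B^2$. The purely $\G_3$-valued pieces assemble into the intrinsic Ricci $\ric_3(u, v)$ of $\G_3$ (using that $L_u v \in \G_3$ for $u, v \in \G_3$, verified from Koszul's formula together with \eqref{sk}, so the induced Levi-Civita on $\G_3$ coincides with the intrinsic one). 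Collecting the residual $\langle c, \cdot\rangle\langle c, \cdot\rangle$-terms, I expect the clean identity
\[
\Ric_\G(u, v) = \ric_3(u, v) + \tfrac12 \bigl(1 - (a + b)^2\bigr) \langle c, u\rangle \langle c, v\rangle.
\]
Since $\Ric_\G|_{\G_3} = \la_3\,\mathrm{Id}$, this rearranges to $\ric_3(u, v) - \la_3 \langle u, v\rangle = \tfrac12((a+b)^2 - 1)\langle c, u\rangle\langle c, v\rangle$.

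Now apply \eqref{r} with $i = 3$: using $2(\la_1 - \la_3)(\la_2 - \la_3)/(\la_1 - \la_2)^2 = a + b - 1$, it gives $\ric_3(u, u) - \la_3 |u|^2 = (a + b - 1)\langle c, u\rangle^2$, and polarization yields $\ric_3(u, v) - \la_3 \langle u, v\rangle = (a + b - 1)\langle c, u\rangle\langle c, v\rangle$. Equating this with the identity from the previous paragraph forces $\tfrac12((a+b)^2 - 1) = a + b - 1$, i.e.\ $(a + b - 1)^2 = 0$, so $a + b = 1$. But $a + b = 1$ unfolds to $(\la_1 - \la_3)^2 + (\la_2 - \la_3)^2 = (\la_1 - \la_2)^2$, which simplifies to $(\la_3 - \la_1)(\la_3 - \la_2) = 0$, contradicting the distinctness of the three eigenvalues. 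Hence $c = 0$ and $\G$ is Ricci-parallel. The main obstacle is the middle step: carefully bookkeeping contributions from each factor of the splitting $\R e_1 \oplus \R e_2 \oplus \G_3$ in all three pieces of $\Ric = R - \tfrac12 B - S(\ad_H)$, and verifying the precise $A^2, B^2$ cancellation that produces exactly the coefficient $1 - (a+b)^2$ of $\langle c, u\rangle\langle c, v\rangle$.
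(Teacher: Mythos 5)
There is a fatal gap: the ``clean identity'' at the heart of your argument is wrong, and with the correct coefficient your comparison with \eqref{r} yields a tautology rather than a contradiction. The error is a sign. Equation \eqref{3} applied to the triple $(e_1,e_2,w)$ gives $\langle [e_1,w],e_2\rangle=-a\langle c,w\rangle$ but $\langle [e_2,w],e_1\rangle=+b\langle c,w\rangle$ --- the two mixed components carry \emph{opposite} signs (compare the paper's own normal form \eqref{j}, where $[e_1,e_4]$ carries $-\frac{\la_{14}^2}{\la_{12}^2}\mu_1e_2$ while $[e_2,e_4]$ carries $+\frac{\la_{24}^2}{\la_{12}^2}\mu_1e_1$). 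You take both signs negative. Carrying out the bookkeeping you describe with the correct signs, the only non-intrinsic contributions to $\ric(u,u)-\ric_3(u,u)$ for $u\in\G_3$ are $-\tfrac12(a^2+b^2)\langle c,u\rangle^2$ from $|[u,e_1]|^2+|[u,e_2]|^2$, $+\tfrac12\langle c,u\rangle^2$ from the quadratic term in $R$, and $+ab\langle c,u\rangle^2$ from $-\tfrac12\tr(\ad_u^2)$ (the $A^2$, $B^2$ and $H$ terms do cancel as you say). The total is $\tfrac12\bigl(1-(a-b)^2\bigr)\langle c,u\rangle^2$, not $\tfrac12\bigl(1-(a+b)^2\bigr)\langle c,u\rangle^2$; and since $a-b=(\la_{13}+\la_{23})/\la_{12}$ one checks $\tfrac12\bigl(1-(a-b)^2\bigr)=-2\la_{13}\la_{23}/\la_{12}^2=1-a-b$, which is exactly the coefficient produced by \eqref{r}. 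So your two expressions for $\ric_3(u,v)-\la_3\langle u,v\rangle$ coincide identically, the equation $(a+b-1)^2=0$ never appears, and no contradiction is obtained. This was to be expected: a correct direct expansion of \eqref{eq4} can only re-derive \eqref{r}; it cannot produce information independent of it.

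To close the argument you need constraints that do not come from evaluating $\ric$ on $\G_3$ alone. This is what the paper does: it first shows that $(\ad_\om)_{|\G_3}=[A_1,A_2]$ is skew-symmetric and that $\om=[u_1,u_2]$ is orthogonal to $[\G_3,\G_3]$, whence $\ric_3(\om,\om)=0$ by \eqref{k}; the scalar-curvature additivity \eqref{s} then forces $\la_1+\la_2+\la_3=0$; finally \eqref{r} applied to $u_1\in\G_1$, $u_2\in\G_2$ and to $\om$ itself gives three further relations among $\la_1,\la_2,\la_3$ and $|\om|^2$, and this system is shown to be inconsistent (it forces $\la_1\la_2=0$). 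Your overall strategy of pitting a second Ricci computation against \eqref{r} on $\G_3$ cannot be repaired without such additional input.
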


\begin{proof} Put $\G_1=\R u_1$, $\G_2=\R u_2$ with $|u_1|=|u_2|=1$. By virtue of \eqref{sk}, $\om=[u_1,u_2]\in\G_3$ and $\G$ is Ricci-parallel if and only if $\om=0$. Suppose that $\om\not=0$. As consequence of Proposition \ref{pr}, we get $\G_3=\h_3\oplus\R\om$ and for any $u\in\h_3$, $\ric_3(u,u)=\la_3\langle u,u\rangle$ and $\ric_3(u,\om)=0$.
	
Denote by $A_i:\G_3\too\G_3$, $v\mapsto [u_i,v]_3$. Both $A_1$ and $A_2$ are skew-symmetric.
	By virtue of \eqref{sk}, for any $v\in\G_3$, $[u_2,v]_2=[u_1,v]_1=0$ and hence
	\begin{align*}
	\;[\om,v]&=[u_1,[u_2,v]]-[u_2,[u_1,v]]\\
	&=[u_1,A_2(v)]-[u_2,A_1(v)]\\
	&=[A_1,A_2](v)+\langle [u_1,A_2(v)],u_2\rangle u_2-\langle [u_2,A_1(v)],u_1\rangle u_1.
	\end{align*}Thus
	\[ (\ad_\om)_{|\G_3}=[A_1,A_2]\esp A_1(\om)=A_2(\om)=0. \]
	On the other hand, for any $v,w\in\G_3$,
	\begin{align*}
	0&=\langle[u_1,[v,w]],u_2\rangle+\langle[v,[w,u_1]],u_2\rangle+\langle[w,[u_1,v]],u_2\rangle\\
	&=\langle[u_1,[v,w]],u_2\rangle+\langle[v,[w,u_1]_2],u_2\rangle+\langle[w,[u_1,v]_2],u_2\rangle\\
	&\stackrel{\eqref{sk},\eqref{3}}=-\frac{(\la_1-\la_3)^2}{(\la_1-\la_2)^2}
	\langle[v,w],[u_1,u_2]\rangle-\langle[w,u_1]_2,[v,u_2]\rangle-\langle[u_1,v]_2,
	[w,u_2]\rangle\\
	&=-\frac{(\la_1-\la_3)^2}{(\la_1-\la_2)^2}
	\langle[v,w],[u_1,u_2]\rangle
	\end{align*}since $[w,u_2]_2=[v,u_2]_2=0$. Thus $\om\in[\G_3,\G_3]^\perp\cap\G_3$.
	In conclusion, $(\ad_\om)_{|\G_3}$ is skew-symmetric and $\om\in[\G_3,\G_3]^\perp\cap\G_3$ and hence, by virtue of \eqref{k}, $\ric_3(\om,\om)=0$ and hence $\s_3=\la_3(\dim\G_3-1)$. By virtue of \eqref{s},
	\[ \s=\la_1+\la_2+\la_3\dim\G_3=\s_1+\s_2+\s_3=\la_3(\dim\G_3-1) \] and hence
	\begin{equation} \la_1+\la_2+\la_3=0. \end{equation}
	Let $(V_1,\ldots,V_p)$ and orthonormal basis of $\G_3$. By using \eqref{r}, we get
	\begin{align*}
	\la_1&=\ric(u_1,u_1)=\ric_1(u_1,u_1)-2\frac{(\la_3-\la_1)(\la_2-\la_1)}{(\la_3-\la_2)^2}\sum_{i}\langle[u_2,V_i],u_1\rangle^2\\
	&\stackrel{\eqref{3}}=-2\frac{(\la_3-\la_1)(\la_2-\la_1)(\la_2-\la_3)^4}{(\la_3-\la_2)^2(\la_1-\la_2)^4}\sum_{i}\langle V_i,\om\rangle^2\\
	&=2\frac{(\la_3-\la_1)(\la_2-\la_3)^2}{(\la_1-\la_2)^3}|\om|^2,\\
	\la_2&=\ric(u_2,u_2)=\ric_2(u_2,u_2)-2\frac{(\la_3-\la_2)(\la_1-\la_2)}{(\la_3-\la_1)^2}\sum_{i}\langle[u_1,V_i],u_2\rangle^2\\
	&\stackrel{\eqref{3}}=-2\frac{(\la_3-\la_2)(\la_1-\la_2)(\la_1-\la_3)^4}{(\la_3-\la_1)^2(\la_1-\la_2)^4}\sum_{i}\langle V_i,\om\rangle^2\\
	&=-2\frac{(\la_3-\la_2)(\la_1-\la_3)^2}{(\la_1-\la_2)^3}|\om|^2,\\
	\la_3|\om|^2&=\ric_3(\om,\om)-2\frac{(\la_1-\la_3)(\la_2-\la_3)}{(\la_1-\la_2)^2}\langle[u_1,u_2],\om\rangle^2\\
	&=-2\frac{(\la_1-\la_3)(\la_2-\la_3)}{(\la_1-\la_2)^2}|\om|^4.
	\end{align*}Put $\la_{ij}=\la_i-\la_j$.
	 So we must have
	\[\la_1+\la_2+\la_3=0, \la_1=-2\frac{\la_{23}^2\la_{13}}{\la_{12}^3}|\om|^2,\;\la_2=2\frac{\la_{13}^2\la_{23}}{\la_{12}^3}|\om|^2\esp\la_3=-2\frac{\la_{13}\la_{23}}{\la_{12}^2}|\om|^2. \]
	This is equivalent to
	\[ \la_1\la_{12}-\la_{23}\la_3=\la_2\la_{12}+\la_3\la_{13}=0\esp \la_3=-2\frac{\la_{13}\la_{23}}{\la_{12}^2}|\om|^2. \]
	This implies
	\[ \la_1^2-\la_1\la_2-\la_1(\la_1+\la_2)=0\esp-\la_2^2+\la_1\la_2+
	(\la_1+\la_2)\la_{2}=0. \]Thus $\la_1\la_2=0$ which is impossible. So $\om=0$ which completes the proof.
	\end{proof}

\begin{co} Any left invariant Riemannian metric with harmonic curvature on a Lie group $G$ of dimension $\leq5$ is Ricci-parallel.
	
\end{co}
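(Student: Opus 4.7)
The plan is a short argument by contradiction combining the two structural results already in place. Suppose $\G$ is a Euclidean Lie algebra of dimension $\leq 5$ with harmonic curvature that is not Ricci-parallel. By the characterization at the end of Theorem \ref{prel}, the existence of a non-vanishing triple $\langle [u_0,v_0],w_0\rangle$ with $u_0\in\G_i$, $v_0\in\G_j$, $w_0\in\G_k$ for $i<j<k$ requires that the Ricci operator has at least three distinct eigenvalues, i.e.\ $r \geq 3$. (Equivalently, if $r \leq 2$ there is no admissible triple $(i,j,k)$, so Ricci-parallelism is automatic from the harmonic curvature assumption.)

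By Corollary \ref{co}, there must also exist some index with $\dim \G_i \geq 3$. Combining this with $r \geq 3$ yields the chain
\[
\dim \G \;\geq\; \dim \G_i + \sum_{l\neq i} \dim \G_l \;\geq\; 3 + (r-1) \;\geq\; 5.
\]
Since $\dim \G \leq 5$ by hypothesis, all inequalities are equalities: $\dim \G = 5$, $r = 3$, and the eigenspace decomposition of $\Ric$ has dimensions $\{3,1,1\}$ in some order.

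The contradiction is then supplied by Theorem \ref{th}. After relabeling so that the two $1$-dimensional eigenspaces are $\G_1$ and $\G_2$ and the $3$-dimensional eigenspace is $\G_3$, the hypothesis of Theorem \ref{th} is satisfied, so $\G$ is Ricci-parallel, contradicting the choice of $\G$. The only subtlety to verify is the legitimacy of this relabeling: the statement and proof of Theorem \ref{th} treat $\la_1,\la_2,\la_3$ symmetrically and never invoke the ordering convention $\la_1<\la_2<\la_3$ from Theorem \ref{prel}, so the relabeling is harmless. There is no genuine obstacle here; the corollary is a clean assembly of Corollary \ref{co} and Theorem \ref{th} through a dimension count.
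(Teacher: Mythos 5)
Your argument is correct and follows essentially the same route as the paper: non-parallel $\Ric$ forces at least three distinct eigenvalues (via Theorem \ref{prel}) and an eigenspace of dimension at least $3$ (Corollary \ref{co}), the dimension count pins down the $(1,1,3)$ splitting in dimension $5$, and Theorem \ref{th} rules it out. Your remark on relabeling is already covered by the paper's convention that no ordering of the eigenvalues is assumed in that section.
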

\begin{proof} Let $\G$ be the Lie algebra of $G$. If the metric is non Ricci-parallel then $\Ric$ must have at least three distinct eigenvalues and at least one eigenvalue with multiplicity greater than 3 (See Corollary \ref{co}). This is not possible in dimensions 3 and 4. In dimension 5, the only possibility is $\G=\G_1\oplus\G_2\oplus\G_3$ where $\dim\G_3=3$ and $\dim\G_1=\dim\G_2=1$. This is not possible by virtue of Theorem \ref{th}.
	\end{proof}

 Let $\G$ be a 6-dimensional Euclidean Lie algebra with harmonic curvature and suppose that $\G$ is not Ricci-parallel. By virtue of Corollary \ref{co} and Theorem \ref{th}, we have two possibilities:
 \begin{enumerate}\item $\G=\G_1\oplus\G_2\oplus\G_3$ with $\dim\G_1=1$, $\dim\G_2=2$ and $\G_3$ is isomorphic either to $\mathrm{su}(2)$ or $\mathrm{sl}(2,\R)$.
 	\item $\G=\G_1\oplus\G_2\oplus\G_3\oplus\G_4$ with $\dim\G_1=\dim\G_2=\dim\G_3=1$ and $\G_4$ is isomorphic either to $\mathrm{su}(2)$ or $\mathrm{sl}(2,\R)$.
 	\end{enumerate}
We will see that both these  cases are impossible.

\begin{theo} Let $\G=\G_1\oplus\G_2\oplus\G_3$ be a Euclidean Lie algebra with harmonic curvature such that $\dim\G_1=1$, $\dim\G_2=2$ and $\dim\G_3=3$. Then $\G$ is Ricci-parallel.

	\end{theo}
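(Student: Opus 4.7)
Suppose for contradiction that $\G$ is not Ricci-parallel. By Theorem \ref{prel} and \eqref{3}, this is equivalent to $\p_3 = [\G_1,\G_2]_3 \neq 0$, and forces $\p_1, \p_2, \p_3$ all nonzero. Since $\G_1$ and $\G_2$ are solvable for dimensional reasons, if $\G_3$ were solvable too then Theorem \ref{main} would force $\G$ Ricci-parallel; hence $\G_3$ is a non-solvable three-dimensional Lie algebra, i.e., either $\mathrm{su}(2)$ or $\mathrm{sl}(2,\R)$.

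I would next exploit the representation $\rho_3: \G_3 \to \mathrm{so}(\G_3^\perp, \prs_3)$ from \eqref{rho}. Since $\G_3$ is simple, $\ker \rho_3$ is either $\{0\}$ or all of $\G_3$. In the latter case $[\G_3, \G_1] \subset \G_3$, so $\p_2 = 0$, and then \eqref{3} forces $\p_1 = \p_3 = 0$, contradicting our assumption. Hence $\rho_3$ is injective. As $\dim \G_3^\perp = 3$ we have $\mathrm{so}(\G_3^\perp, \prs_3) \cong \mathrm{so}(3)$, and since $\G_3$ is three-dimensional and simple the map $\rho_3$ is in fact an isomorphism; in particular $\G_3 \cong \mathrm{su}(2)$, ruling out $\mathrm{sl}(2,\R)$.

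With $\G_3 \cong \mathrm{su}(2)$, the plan is to parameterize the algebra and derive an explicit contradiction. Fix orthonormal bases $(u_1)$ of $\G_1$, $(u_2, v_2)$ of $\G_2$, and $(e_1, e_2, e_3)$ of $\G_3$ with $(e_i)$ adapted to the $\mathrm{su}(2)$ structure constants in a Milnor-type form (rescaled by the metric). The brackets between distinct $\G_i$'s are then controlled by \eqref{sk} (skew-symmetry of the relevant ad-projections), by $\rho_3$ (which determines $[\G_3,\G_1]_{\G_2}$ and $[\G_3,\G_2]_{\G_1 \oplus \G_2}$), and by \eqref{3} (coupling $\p_1, \p_2, \p_3$); the remaining free parameters are a handful of structure constants together with $[u_2, v_2] \in \G_2$. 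The eigenvalue conditions $\Ric|_{\G_i} = \la_i \mathrm{Id}$, via \eqref{r} and the scalar curvature identity \eqref{s}, together with the Jacobi identity, then give a polynomial system in these parameters and the eigenvalues $\la_1, \la_2, \la_3$.

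The hard part will be showing that this system admits no solution under the nondegeneracy condition $\p_3 \neq 0$; this is presumably the "huge computation where the use of Maple is needed" announced in the introduction. I would expect the outcome to be an algebraic identity of the form $|\p_3|^2 \cdot Q(\la_1,\la_2,\la_3) = 0$ with $Q$ nonvanishing on the admissible range of distinct eigenvalues, yielding the contradiction. To keep the computation tractable, I would first use \eqref{r} and \eqref{s} to express $\ric_i$ on each $\G_i$ and eliminate as many bracket coefficients as possible, reserving the Jacobi relations for the last reduction step.
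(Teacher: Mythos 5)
Your setup is correct, and your observation about $\rho_3$ is a genuinely nice point that the paper does not make: the kernel of the representation $\rho_3$ of \eqref{rho} is an ideal of the simple algebra $\G_3$, and if $\rho_3$ were trivial then $\p_1=\p_2=0$, whence $\p_3=0$ by \eqref{3}; so $\rho_3$ is injective into the three-dimensional algebra $\mathrm{so}(\G_3^\perp,\prs_3)\cong\mathrm{so}(3)$, hence an isomorphism, and $\G_3\cong\mathrm{su}(2)$, eliminating $\mathrm{sl}(2,\R)$ from the start. The genuine gap is everything after that: the entire contradiction is deferred to a parameterized polynomial system that you only describe and ``expect'' to be inconsistent. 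Nothing you wrote establishes the anticipated identity $|\p_3|^2\,Q(\la_1,\la_2,\la_3)=0$, and your guess that this theorem is where the announced Maple computation lives is wrong --- the heavy computation in the paper is reserved for the four-eigenvalue case $\G_1\oplus\G_2\oplus\G_3\oplus\G_4$. As it stands, your proposal is an accurate reduction plus a plan, not a proof.

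For comparison, the paper finishes this case by hand with a structural argument rather than a computation. Writing $\G_1=\R X_1$, $\G_2=\mathrm{span}\{X_2,X_3\}$ with $[X_2,X_3]=aX_2$, and $\om_{1j}=[X_1,X_j]_3$, the Jacobi identities give explicit formulas for $(\ad_{\om_{12}})_{|\G_3}$ and $(\ad_{\om_{13}})_{|\G_3}$ together with compatibility relations (the system \eqref{sum}). If $\dim\mathrm{span}\{\om_{12},\om_{13}\}=1$, a nonzero combination $X$ of $\om_{12},\om_{13}$ has $\ad_X$ skew-symmetric on $\G_3$, so its orthogonal complement $\h_3$ in $\G_3$ is an $\ad_X$-invariant subalgebra, i.e.\ a proper ideal of the simple algebra $\G_3$ --- impossible. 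If $\dim\mathrm{span}\{\om_{12},\om_{13}\}=2$, the relations force $ar=0$, and in either subcase ($r=0$, or $a=0$ with $r\neq0$) enough of the maps $\rho_i$ vanish to conclude $[\om_{12},\om_{13}]=0$, giving a two-dimensional abelian subalgebra of $\G_3$ --- again impossible. To complete your route you would have to actually carry out and verify the elimination you sketch; alternatively, your $\mathrm{su}(2)$ reduction can simply be noted as a corollary of the paper's argument, which does not need it.
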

	
	\begin{proof} We choose an orthonormal family $(X_1,X_2,X_3)$ such that $\G_1=\R X_1$, $\G_2=\mathrm{span}\{X_2,X_3\}$ and $[X_2,X_3]=aX_2$. The relation \eqref{sk} has many consequences. There exists $p\in\G_3$ and $r\in\R$ such that, for any $Y\in\G_3$, $$[X_1,X_2]_2=r X_3\esp [X_1,X_3]_2=-rX_2,\;
		[Y,X_2]_2=\langle p,Y\rangle X_3\esp [Y,X_3]_2=-\langle p,Y\rangle X_2.  $$
		Moreover, if we put  $\om_{1j}=[X_1,X_j]_3\in\G_3$ for $j=2,3$ and for $i=1,2,3$, $\rho_i:\G_3\too\G_3$, $Y\mapsto [X_i,Y]_3$, the $\rho_i$ are skew-symmetric. Finally, for any $Y\in\G$, $[X_1,Y]_1=0$.
		
		Now, for any  $Y\in\G_3$ and for $j=2,3$,
		\begin{align*}
		\;[[X_1,X_j],Y]&=[\om_{1j},Y]+[[X_1,X_j]_2,Y]\\
		&=[X_1,[X_j,Y]]-[X_j,[X_1,Y]]\\
		&=[X_1,\rho_j(Y)]+[X_1,[X_j,Y]_2]-[X_j,\rho_1(Y)]
		-[X_j,[X_1,Y]_2]\\
		&=\rho_1\circ\rho_j(Y)+[X_1,\rho_j(Y)]_2+[X_1,[X_j,Y]_2]-
		\rho_j\circ\rho_1(Y)-[X_j,\rho_1(Y)]_1-[X_j,\rho_1(Y)]_2-[X_j,[X_1,Y]_2].
		\end{align*}
			So we get
		\[ (\ad_{\om_{12}})_{|\G_3}=[\rho_1,\rho_2]-r\rho_3-\langle p,\bullet\rangle\om_{13},\;
		(\ad_{\om_{13}})_{|\G_3}=[\rho_1,\rho_3]+r\rho_2+\langle p,\bullet\rangle\om_{12},
		 \]and, for any $Y\in\G_3$,
		\begin{align*}
		[[X_1,X_j]_2,Y]_1+[[X_1,X_j]_2,Y]_2
		&=[X_1,\rho_j(Y)]_2+[X_1,[X_j,Y]_2]_2-[X_j,\rho_i(Y)]_2-[X_j,\rho_1(Y)]_1
		-[X_j,[X_1,Y]_2].
		\end{align*}So
		\[ \begin{cases}r\langle [X_3,Y],X_1\rangle+\langle [X_2,\rho_1(Y)],X_1\rangle=0,\\
		r\langle p,Y\rangle X_2=[X_1,\rho_2(Y)]_2+r\langle p,Y\rangle X_2+\langle p,\rho_1(Y)\rangle X_3-\langle [X_1,Y]_2,X_3\rangle [X_2,X_3],\\
		-r\langle [X_2,Y],X_1\rangle+\langle [X_3,\rho_1(Y)],X_1\rangle=0,\\
		r\langle p,Y\rangle X_3=[X_1,\rho_3(Y)]_2+r\langle p,Y\rangle X_3-\langle p,\rho_1(Y)\rangle X_2 -\langle[X_1,Y]_2,X_2\rangle[X_3,X_2].
		\end{cases} \]
		Thus
		\[ \begin{cases}
		\rho_1(\om_{12})=r\om_{13}\esp \rho_1(\om_{13})=-r\om_{12},\\
		0=[X_1,\rho_2(Y)]_2+\langle p,\rho_1(Y)\rangle X_3-\langle [X_1,Y]_2,X_3\rangle [X_2,X_3],\\
		0=[X_1,\rho_3(Y)]_2-\langle p,\rho_1(Y)\rangle X_2 -\langle[X_1,Y]_2,X_2\rangle[X_3,X_2],
		\end{cases} \]
		and finally,
		\begin{equation}\label{sum} \begin{cases}(\ad_{\om_{12}})_{|\G_3}=[\rho_1,\rho_2]-r\rho_3-\langle p,\bullet\rangle\om_{13},\\
		(\ad_{\om_{13}})_{|\G_3}=[\rho_1,\rho_3]+r\rho_2+\langle p,\bullet\rangle\om_{12},\\
		\rho_1(\om_{12})=r\om_{13}\esp \rho_1(\om_{13})=-r\om_{12},\\
		0=[X_1,\rho_2(Y)]_2+\langle p,\rho_1(Y)\rangle X_3-\langle [X_1,Y]_2,X_3\rangle [X_2,X_3],\\
		0=[X_1,\rho_3(Y)]_2-\langle p,\rho_1(Y)\rangle X_2 -\langle[X_1,Y]_2,X_2\rangle[X_3,X_2].
		\end{cases} \end{equation}
			On the other hand,
		\begin{align*}
		[[X_1,X_2],X_3]&=[\om_{12},X_3]=-\rho_3(\om_{12})-\langle p,\om_{12}\rangle X_2+\langle[\om_{12},X_3],X_1\rangle X_1,\\
		&=[X_1,[X_2,X_3]]-[X_2,[X_1,X_3]]=a[X_1,X_2]-[X_2,\om_{13}],\\
		[[X_1,X_3],X_2]&=[\om_{13},X_2]=-\rho_2(\om_{13})+\langle p,\om_{13}\rangle X_3+\langle[\om_{13},X_2],X_1\rangle X_1,\\
		&=[X_1,[X_3,X_2]]-[X_3,[X_1,X_2]]=-a[X_1,X_2]-[X_3,\om_{12}].
		\end{align*}	
		So
		\[\begin{cases} -\rho_3(\om_{12})-\langle p,\om_{12}\rangle X_2+\langle[\om_{12},X_3],X_1\rangle X_1=a\om_{12}+arX_3-\rho_2(\om_{13})+\langle p,\om_{13}\rangle X_3-\langle [X_2,\om_{13}],X_1\rangle X_1,\\
		-\rho_2(\om_{13})+\langle p,\om_{13}\rangle X_3+\langle[\om_{13},X_2],X_1\rangle X_1=
		-a\om_{12}-ar X_3-\rho_3(\om_{12})-\langle p,\om_{12}\rangle X_2-\langle[X_3,\om_{12}],X_1\rangle X_1,
		\end{cases} \]	
		and hence
		\[\langle p,\om_{12}\rangle=\langle p,\om_{12}\rangle=0,\quad ar=-\langle p,\om_{13}\rangle=-\langle p,\om_{12}\rangle,\quad -\rho_3(\om_{12})=a\om_{12}-\rho_2(\om_{13})\esp -\rho_2(\om_{13})=-a\om_{12} -\rho_3(\om_{12}). \]
		Suppose that $\G$ is not Ricci-parallel then $\dim\mathrm{span}\{\om_{12},\om_{13}\}\geq1$ and, according to Theorem \ref{main}, $\G_3$ is isomorphic either to $\mathrm{su}(2)$ or $\mathrm{sl}(2,\R)$.

		 Suppose that $\dim\mathrm{span}\{\om_{12},\om_{13}\}=1$. Then there exists $(\mu_1,\mu_2)\not=(0,0)$ such that $\mu_1\om_{12}+\mu_{2}\om_{13}=0$. The vector
		$X=\mu_2\om_{12}-\mu_1\om_{13}\not=0$ and, by virtue of \eqref{sum}, $\ad_X$ is skew-symmetric.
		The orthogonal of $X$ is the subalgebra $\h_3$ defined in \eqref{h} and since   $\ad_{X}$ is skew-symmetric it leaves invariant $\h_3$  and hence it is an ideal which is not possible since $\G_3$ is simple. 
		
		Suppose now that  $\dim\mathrm{span}\{\om_{12},\om_{13}\}=2$. We have
		\[ ar=\langle p,\om_{12}\rangle=\langle p,\om_{13}\rangle=0. \]
		If $r=0$ then we get from \eqref{sum} that $\rho_1(\om_{12})=\rho_1(\om_{13})=0$ and hence $\rho_1=0$ and then $[\om_{12},\om_{13}]=0$ which is impossible since $\G_3$ does not contains an abelian subalgebra.\\
		If $a=0$ and $r\not=0$. From \eqref{sum}, we have for any $Y\in\G_3$,
		\[ [X_1,\rho_2(Y)]_2+\langle p,\rho_1(Y)\rangle X_3=
		[X_1,\rho_3(Y)]_2-\langle p,\rho_1(Y)\rangle X_2 =0. \]
		But $p\in\{\om_{12},\om_{13}\}^\perp$ and hence $\rho_1(p)=0$.
		So
		\[ \langle [X_1,\rho_2(Y)]_2,X_2\rangle=\langle [X_1,\rho_3(Y)]_2,X_3\rangle=
		 \langle[X_1,\rho_2(Y)]_2,X_3\rangle=
		 \langle [X_1,\rho_3(Y)]_2,X_2\rangle=0. \]By using \eqref{3}, we get
		 that $\rho_2=\rho_3=0$. This implies that $[\om_{12},\om_{13}]=0$ which is impossible.
			\end{proof}

			\begin{pr}\label{prj} Let $\G=\G_1\oplus\G_2\oplus\G_3\oplus\G_4$ a Euclidean Lie algebra with harmonic curvature such that
				$\dim\G_1=\dim\G_2=\dim\G_3=1$, $\dim\G_4=3$ and $\la_1<\la_2<\la_3$. If $\G$ is not Ricci-parallel then there exists an orthonormal basis $(e_1,\ldots,e_6)$ of $\G$ with $e_i\in\G_i$ for $i=1,2,3$ and $(e_4,e_5,e_6)$ is a basis of $\G_4$ such that the non vanishing Lie brackets are
				\begin{equation} \label{j}\begin{cases}\;[e_4,e_5]=a e_6,\; [e_4,e_6]=b e_5\esp [e_5,e_6]=c e_4,\\\;[e_1,e_2]=\mu_1 e_4+\frac{\la_{12}^2}{\la_{23}^2}r e_3, [e_1,e_3]=\mu_2 e_5-\frac{\la_{13}^2}{\la_{23}^2}r e_2,\; [e_2,e_3]=\mu_3 e_6+r e_1,\\
				[e_1,e_4]=\al_1 e_5-\frac{\la_{14}^2}{\la_{12}^2}\mu_1 e_2,\;
				[e_1,e_5]=-\al_1 e_4-\frac{\la_{14}^2}{\la_{13}^2}\mu_2 e_3,\;
				\\
				[e_2,e_4]=\al_2 e_6+\frac{\la_{24}^2}{\la_{12}^2}\mu_1 e_1,\;
				[e_2,e_6]=-\al_2 e_4-\frac{\la_{24}^2}{\la_{23}^2}\mu_3 e_3,\\
				[e_3,e_5]=\al_3 e_6+\frac{\la_{34}^2}{\la_{13}^2}\mu_2 e_1,\;
				[e_3,e_6]=-\al_3 e_5+\frac{\la_{34}^2}{\la_{23}^2}\mu_3 e_2,
				\end{cases}\end{equation}where $a\not=0,b\not=0,c\not=0$, $\mu_i>0$ for $i=1,2,3$, $r,\al_1,\al_2,\al_3\in\R$ and $\la_{ij}=\la_i-\la_j$.

				\end{pr}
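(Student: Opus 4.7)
The plan is to first pin down $\G_4$, then to parameterize all brackets using Theorem \ref{prel}, and finally to use a few Jacobi identities to force the orthogonality and special form claimed in \eqref{j}.

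Step one: apply Theorem \ref{main}. Since $\G$ is not Ricci-parallel but each of $\G_1,\G_2,\G_3$ is $1$-dimensional (hence abelian and a fortiori solvable), Theorem \ref{main} forces $\G_4$ to be non-solvable. A $3$-dimensional non-solvable Lie algebra is simple, so $\G_4\cong\mathrm{su}(2)$ or $\mathrm{sl}(2,\R)$. By the classical result on unimodular $3$-dimensional Lie algebras (diagonalize the symmetric endomorphism of $\G_4$ induced by the bracket via the Hodge star), there is an orthonormal basis $(e_4,e_5,e_6)$ of $\G_4$ such that $[e_4,e_5]=ae_6$, $[e_4,e_6]=be_5$, $[e_5,e_6]=ce_4$ with $abc\ne 0$ by simplicity. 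This yields half of the brackets in \eqref{j}.

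Step two: pick unit vectors $e_i\in\G_i$ for $i=1,2,3$. By \eqref{sk}, $[e_i,e_j]$ has no component in $\G_i$ or $\G_j$, so I write $[e_i,e_j]=V_{ij}+s^{ij}e_k\in\G_4\oplus\G_k$ with $\{i,j,k\}=\{1,2,3\}$. Setting $r=\langle[e_2,e_3],e_1\rangle$ and applying \eqref{3} to the triple $(e_1,e_2,e_3)$ shows $s^{12}=\frac{\la_{12}^2}{\la_{23}^2}r$ and $s^{13}=-\frac{\la_{13}^2}{\la_{23}^2}r$, matching the scalar parts of the first three brackets in \eqref{j}. Analogously, for any $v\in\G_4$, \eqref{sk} kills $[e_i,v]_i$, and \eqref{3} applied to the triple $(i,j,4)$ expresses $[e_i,v]_j$ in terms of $\langle V_{ij},v\rangle$ via the coefficients $\pm\la_{i4}^2/\la_{ij}^2$. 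Writing $\rho_i(v):=[e_i,v]_4$, which is skew-symmetric on $\G_4$ by \eqref{sk}, this provides the general shape of all brackets $[e_i,v]$ claimed in \eqref{j} modulo the alignment of $V_{ij}$ with a single basis vector of $\G_4$.

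Step three: the crux is to show that $V_{12},V_{13},V_{23}$ are pairwise orthogonal, nonzero, and can be taken as positive multiples of $e_4,e_5,e_6$. For orthogonality, I expand the Jacobi identity $[e_1,[e_2,e_3]]+[e_2,[e_3,e_1]]+[e_3,[e_1,e_2]]=0$ and project onto $\G_1,\G_2,\G_3$. Each projection collapses to a single scalar equation of the form $\bigl(\frac{\la_{k4}^2}{\la_{ik}^2}-\frac{\la_{j4}^2}{\la_{ij}^2}\bigr)\langle V_{ij},V_{ik}\rangle=0$, so it is enough to check the coefficient is nonzero to conclude $\langle V_{ij},V_{ik}\rangle=0$. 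After obtaining orthogonality, set $e_4:=V_{12}/|V_{12}|$, $e_5:=V_{13}/|V_{13}|$, $e_6:=V_{23}/|V_{23}|$ and $\mu_i:=|V_{ij}|>0$; then verify via the $\G_4$-component of further Jacobi identities of the form $(e_i,e_j,v)$ with $v\in\G_4$ both that (a) each $\rho_i$ has the $2$-dimensional skew form encoded by a single parameter $\al_i$ (e.g. $\rho_1$ vanishes on $e_6$ and rotates in the $(e_4,e_5)$-plane) and (b) $[e_4,e_5],[e_4,e_6],[e_5,e_6]$ really lie in $\R e_6,\R e_5,\R e_4$ respectively, yielding the Milnor form already chosen in Step one with $a,b,c$ as claimed.

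Main obstacle: the scalar coefficients appearing in the orthogonality equations of Step three do not a priori have to be nonzero; for specific values of $\la_4$ (satisfying a harmonic-mean-type relation with $\la_1,\la_2,\la_3$) they can degenerate. The hard part is to rule these degeneracies out directly from the constraints already accumulated, or equivalently to perform a separate analysis in each degenerate regime, most likely by feeding the $\G_4$-projection of the same Jacobi identity, $\rho_1(V_{23})-\rho_2(V_{13})+\rho_3(V_{12})=0$, together with the skew-symmetry of the $\rho_i$'s, back into the system. A further technical checkpoint is to exclude the possibility that some $V_{ij}$ vanishes, which would decouple the structure in a way incompatible with non-Ricci-parallelism, and to confirm that the basis $(e_4,e_5,e_6)$ thus produced really is the Milnor basis of $\G_4$.
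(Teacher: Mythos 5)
Your overall strategy---Theorem \ref{main} to force $\G_4$ simple, Theorem \ref{prel} to parameterize the brackets, then Jacobi identities to align the $V_{ij}$ with an orthonormal basis---is the same family of ideas as the paper's, but the two steps that carry all the weight have genuine gaps. First, your orthogonality argument rests on the single identity $J(e_1,e_2,e_3)=0$, whose $\G_i$-projections give $\bigl(\frac{\la_{k4}^2}{\la_{ik}^2}-\frac{\la_{j4}^2}{\la_{ij}^2}\bigr)\langle V_{ij},V_{ik}\rangle=0$; as you yourself note, the coefficient vanishes for special values of $\la_4$ (e.g.\ $\la_1=0,\la_2=1,\la_3=2,\la_4=4/3$ kills the first one), and you do not resolve this degenerate regime. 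The paper never uses this identity for orthogonality: it projects the identities $J(X_i,X_j,Y)=0$ for $Y\in\G_4$ onto \emph{every} eigenspace, obtaining (i) $(\ad_{\om_{ij}})_{|\G_4}=[\rho_i,\rho_j]-r_l\rho_l+(\text{rank-one skew terms})$, (ii) $\rho_i(\om_{ij})\in\R\,\om_{ik}$, and (iii) $\rho_1(\om_{23})=\rho_2(\om_{13})=\rho_3(\om_{12})=0$, the last from a $3\times3$ linear system whose determinant is nonzero. Orthogonality then follows from the skew-symmetry of the $\rho_i$ together with (ii)--(iii) when some $\rho_i\neq0$, and from equating the two expressions of $[\om_{12},\om_{13}]$ supplied by (i) when all $\rho_i=0$. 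Your proposed fallback, the $\G_4$-projection $\rho_1(V_{23})-\rho_2(V_{13})+\rho_3(V_{12})=0$, is strictly weaker than (iii) and will not close the degenerate cases by itself.

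Second, the claim that all three $V_{ij}$ are nonzero (needed for $\mu_i>0$) is asserted, not proved: non-Ricci-parallelism only guarantees that \emph{some} mixed bracket component is nonzero, so a priori one could have, say, $V_{13}=V_{23}=0$, or all $V_{ij}=0$ with $r\neq0$; nothing in your Steps one--three "decouples" in an obviously contradictory way. The paper's argument here is essential and absent from your proposal: $\p_4=\mathrm{span}\{\om_{12},\om_{13},\om_{23}\}$ is invariant under the $\rho_i$ by (ii)--(iii), its orthogonal complement $\h_4$ in $\G_4$ is shown via (i) to be an ideal of $\G_4$, and simplicity forces $\h_4=0$, whence the three $\om_{ij}$ span $\G_4$ and in particular are all nonzero and linearly independent. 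The same machinery is what delivers your checkpoint (b), i.e.\ that the normalized $\om$-basis diagonalizes the $\G_4$-bracket; fixing a Milnor basis in Step one and a second basis in Step three and hoping they coincide cannot be shortcut. Until these two points are supplied, the proposal is a plan rather than a proof.
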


	\begin{proof} For $i=1,2,3$, put $\G_i=\R X_i$ with $|X_i|=1$  and $\om_{ij}=[X_i,X_j]_4$ for $1\leq i<j\leq 3$. By virtue of \eqref{sk}, we have
		\[ [X_1,X_2]=r_3X_3+\om_{12},\; [X_1,X_3]=r_2X_2+\om_{13}\esp [X_2,X_3]=r_1X_1+\om_{23}. \]
	 For any $X\in\G_1\oplus\G_2\oplus\G_3$, denote by $\rho(X):\G_4\too\G_4$, $Y\mapsto[X,Y]_4$ and for $i=1,2,3$ $\rho_i=\rho(X_i)$. For $Y\in\G_4$ and $1\leq i<j\leq 3$,
		\begin{align*}
		\;[[X_i,X_j],Y]&=[\om_{ij},Y]+\sum_{l\not=4}[[X_i,X_j]_l,Y],\\
		&=[X_i,[X_j,Y]]-[X_j,[X_i,Y]]\\
		&=[X_i,\rho_j(Y)]+\sum_{l\not=4}[X_i,[X_j,Y]_l]-[X_j,\rho_i(Y)]-
		\sum_{l\not=4}[X_j,[X_i,Y]_l].
		\end{align*}So
		\begin{equation}\label{eqij}
		[\om_{ij},Y]+\sum_{l\not=4}[[X_i,X_j]_l,Y]=[X_i,\rho_j(Y)]-[X_j,\rho_i(Y)]+
		\sum_{l\not=4}\left([X_i,[X_j,Y]_l]-
		[X_j,[X_i,Y]_l]\right).
		\end{equation}
		By taking the $\G_4$-component, we get
		\[ (\ad_{\om_{ij}})_{|\G_4}(Y)=[\rho_i,\rho_j](Y)
		-\sum_{l\not=4}\rho([X_i,X_j]_l)(Y)+\sum_{l\not=4}\left(
		[X_i,[X_j,Y]_l]_4-[X_j,[X_i,Y]_l]_4
		\right). \]
		Define $F_{ij}:\G_4\too\G_4$ by
		\[ F_{ij}(Y)=[X_i,[X_j,Y]_j]_4-[X_j,[X_i,Y]_i]_4+\sum_{l\not=4,l\not=i,l\not=j}\left(
		[X_i,[X_j,Y]_l]_4-[X_j,[X_i,Y]_l]_4
		\right). \]
		So we get
		\[ (\ad_{\om_{ij}})_{|\G_4}=[\rho_i,\rho_j]-\sum_{l\not=4}\rho([X_i,X_j]_l)+F_{ij}. \]Having \eqref{sk} and \eqref{3} in mind,  we have
		\begin{align*} F_{12}(Y)&=
		[X_1,[X_2,Y]_3]_4-[X_2,[X_1,Y]_3]_4=\langle [X_2,Y],X_3\rangle\om_{13}
		-\langle [X_1,Y],X_3\rangle\om_{23}\\&=-\frac{\la_{24}^2}{\la_{23}^2}\langle Y,\om_{23}\rangle\om_{13}+\frac{\la_{14}^2}{\la_{13}^2}\langle Y,\om_{13}\rangle\om_{23},\\
		F_{13}(Y)&=
		[X_1,[X_3,Y]_2]_4-[X_3,[X_1,Y]_2]_4=\langle [X_3,Y],X_2\rangle\om_{12}
		+\langle [X_1,Y],X_2\rangle\om_{23}\\&=\frac{\la_{34}^2}{\la_{23}^2}\langle Y,\om_{23}\rangle\om_{12}-\frac{\la_{14}^2}{\la_{12}^2}\langle Y,\om_{12}\rangle\om_{23},\\
		F_{23}(Y)&=
		[X_2,[X_3,Y]_1]_4-[X_3,[X_2,Y]_1]_4=-\langle [X_3,Y],X_1\rangle\om_{12}
		+\langle [X_2,Y],X_1\rangle\om_{13}\\&=-\frac{\la_{34}^2}{\la_{13}^2}\langle Y,\om_{13}\rangle\om_{12}+\frac{\la_{24}^2}{\la_{12}^2}\langle Y,\om_{12}\rangle\om_{13}.
		\end{align*} Thus
		\begin{equation}\label{ad} \begin{cases}(\ad_{\om_{12}})_{|\G_4}=[\rho_1,\rho_2]-r_3\rho_3-\frac{\la_{24}^2}{\la_{23}^2}\langle \bullet,\om_{23}\rangle\om_{13}+\frac{\la_{14}^2}{\la_{13}^2}\langle \bullet,\om_{13}\rangle\om_{23},\\
		(\ad_{\om_{13}})_{|\G_4}=[\rho_1,\rho_3]-r_2\rho_2+\frac{\la_{34}^2}{\la_{23}^2}\langle \bullet,\om_{23}\rangle\om_{12}-\frac{\la_{14}^2}{\la_{12}^2}\langle \bullet,\om_{12}\rangle\om_{23},\\
		(\ad_{\om_{23}})_{|\G_4}=[\rho_2,\rho_3]-r_1\rho_1-\frac{\la_{34}^2}{\la_{13}^2}\langle \bullet,\om_{13}\rangle\om_{12}+\frac{\la_{24}^2}{\la_{12}^2}\langle \bullet,\om_{12}\rangle\om_{13}.
		\end{cases} \end{equation}
		Now, for $(i,j)=(1,2)$ we identify the $\G_k$-component of \eqref{eqij} for $k=1,2,3$,
		\begin{equation}\label{12}\begin{cases}
		r_3\langle [X_3,Y],X_1\rangle=-\langle [X_2,\rho_1(Y)],X_1\rangle
		-\langle [X_2,[X_1,Y]_3],X_1\rangle,\\
		r_3\langle [X_3,Y],X_2\rangle=\langle [X_1,\rho_2(Y)],X_2\rangle+
		\langle [X_1,[X_2,Y]_3],X_2\rangle,\\
		0=\langle [X_1,\rho_2(Y)],X_3\rangle
		-\langle [X_2,\rho_1(Y)],X_3\rangle.\end{cases}
		\end{equation}We do the same for $(i,j)=(1,3)$
		\begin{equation}\label{13}\begin{cases}
		r_2\langle [X_2,Y],X_1\rangle=-\langle [X_3,\rho_1(Y)],X_1\rangle
		-\langle [X_3,[X_1,Y]_2],X_1\rangle,\\
		0=\langle [X_1,\rho_3(Y)],X_2\rangle-\langle [X_3,\rho_1(Y)],X_2\rangle,\\
		r_2\langle [X_2,Y],X_3\rangle=\langle [X_1,\rho_3(Y)],X_3\rangle
		+\langle [X_1,[X_3,Y]_2],X_3\rangle.\end{cases}
		\end{equation}Finally, we take $(i,j)=(2,3)$,
		\begin{equation}\label{23}\begin{cases}
		0=\langle [X_2,\rho_3(Y)],X_1\rangle-\langle [X_3,\rho_2(Y)],X_1\rangle
		,\\
		r_1\langle [X_1,Y],X_2\rangle=-\langle [X_3,\rho_2(Y)],X_2\rangle-\langle [X_3,[X_2,Y]_1],X_2\rangle,\\
		r_1\langle [X_1,Y],X_3\rangle=\langle [X_2,\rho_3(Y)],X_3\rangle
		+\langle [X_2,[X_3,Y]_1],X_3\rangle.\end{cases}
		\end{equation}
		By using \eqref{3}, we get that \eqref{12}-\eqref{23} are equivalent to
		\begin{equation*}\label{12b}\begin{cases}
		r_3\frac{\la_{34}^2}{\la_{13}^2}\langle Y,\om_{13}\rangle=
		\frac{\la_{24}^2}{\la_{12}^2}\langle Y,\rho_1(\om_{12})\rangle
		+r_1\frac{\la_{14}^2}{\la_{13}^2}\langle Y,\om_{13}\rangle,\\
		r_3\frac{\la_{34}^2}{\la_{23}^2}\langle Y,\om_{23}\rangle=
		\frac{\la_{14}^2}{\la_{12}^2}\langle Y,\rho_2(\om_{12})\rangle
		-r_2\frac{\la_{24}^2}{\la_{23}^2}\langle Y,\om_{23}\rangle,\\
		\langle\frac{\la_{14}^2}{\la_{13}^2}\rho_2(\om_{13})-\frac{\la_{24}^2}{\la_{23}^2}
		\rho_1(\om_{23}),Y\rangle=0,\end{cases}
		\end{equation*}
		\begin{equation*}\label{13b}\begin{cases}
		r_2\frac{\la_{24}^2}{\la_{12}^2}\langle Y,\om_{12}\rangle=
		\frac{\la_{34}^2}{\la_{13}^2}\langle Y,\rho_1(\om_{13})\rangle
		-r_1\frac{\la_{14}^2}{\la_{12}^2}\langle Y,\om_{12}\rangle,\\
		\frac{\la_{14}^2}{\la_{12}^2}\langle Y,\rho_3(\om_{12})\rangle
		+\frac{\la_{34}^2}{\la_{23}^2}\langle Y,\rho_1(\om_{23})\rangle=0,\\
		-r_2\frac{\la_{24}^2}{\la_{23}^2}\langle Y,\om_{23}\rangle 	=\frac{\la_{14}^2}{\la_{13}^2}\langle Y,\rho_3(\om_{13})\rangle
		+r_3\frac{\la_{34}^2}{\la_{23}^2}\langle Y,\om_{23}\rangle,
		\end{cases}\end{equation*}
		\begin{equation*}\label{23b}\begin{cases}
		-\frac{\la_{24}^2}{\la_{12}^2}\langle Y,\rho_3(\om_{12})\rangle
		+\frac{\la_{34}^2}{\la_{13}^2}\langle Y,\rho_2(\om_{13})\rangle=0,\\
		-r_1\frac{\la_{14}^2}{\la_{12}^2}\langle Y,\om_{12}\rangle 	=\frac{\la_{34}^2}{\la_{23}^2}\langle Y,\rho_2(\om_{23})\rangle
		+r_2\frac{\la_{24}^2}{\la_{12}^2}\langle Y,\om_{12}\rangle,\\
		-r_1\frac{\la_{14}^2}{\la_{13}^2}\langle Y,\om_{13}\rangle 	=\frac{\la_{24}^2}{\la_{23}^2}\langle Y,\rho_3(\om_{23})\rangle
		-r_3\frac{\la_{34}^2}{\la_{13}^2}\langle Y,\om_{13}\rangle.
		\end{cases}\end{equation*}
		These relations hold for any $Y\in\G_4$ and hence
		\begin{equation}\label{9}
		\begin{cases}\rho_1(\om_{12})=\frac{\la_{12}^2}{\la_{24}^2}\left(r_3\frac{\la_{34}^2}{\la_{13}^2}        -r_1\frac{\la_{14}^2}{\la_{13}^2}\right)\om_{13},\\
		\rho_1(\om_{13})=\frac{\la_{13}^2}{\la_{34}^2}\left( r_2\frac{\la_{24}^2}{\la_{12}^2}+r_1\frac{\la_{14}^2}{\la_{12}^2}   \right)
		\om_{12},
		\end{cases},\begin{cases}
		\rho_2(\om_{12})=\frac{\la_{12}^2}{\la_{14}^2}\left(r_3\frac{\la_{34}^2}{\la_{23}^2}+r_2\frac{\la_{24}^2}{\la_{23}^2}    \right)\om_{23},\\
		\rho_2(\om_{23})=-\frac{\la_{23}^2}{\la_{34}^2}\left( r_1\frac{\la_{14}^2}{\la_{12}^2}+ r_2\frac{\la_{24}^2}{\la_{12}^2}    \right)\om_{12},
		\end{cases},\begin{cases}
		\rho_3(\om_{13})=-\frac{\la_{13}^2}{\la_{14}^2}\left( r_2\frac{\la_{24}^2}{\la_{23}^2}+r_3\frac{\la_{34}^2}{\la_{23}^2}   \right)\om_{23},\\
		\rho_3(\om_{23})=\frac{\la_{23}^2}{\la_{24}^2}\left( r_3\frac{\la_{34}^2}{\la_{13}^2}-r_1\frac{\la_{14}^2}{\la_{13}^2}    \right)\om_{13},\\
		\end{cases}
		\end{equation}
		
		and\[ \begin{cases}
		{\la_{23}^2}{\la_{14}^2}\rho_2(\om_{13})-{\la_{13}^2}{\la_{24}^2}
		\rho_1(\om_{23})=0,\\
		{\la_{23}^2} {\la_{14}^2}\rho_3(\om_{12})
		+{\la_{12}^2}{\la_{34}^2}\rho_1(\om_{23})=0,\\
		-{\la_{13}^2}{\la_{24}^2}\rho_3(\om_{12})
		+{\la_{12}^2}{\la_{34}^2}\rho_2(\om_{13})=0.
		\end{cases} \]The determinant of this system in $(\rho_1(\om_{23}),\rho_2(\om_{13}),\rho_3(\om_{12}))$ in non zero and hence
		\begin{equation}\label{10} \rho_1(\om_{23})=\rho_2(\om_{13})=\rho_3(\om_{12})=0. \end{equation}
		From \eqref{ad}, \eqref{9} and \eqref{10}, $\mathrm{span}\{\om_{12},\om_{13},\om_{23}\}=\p_4$ is a subalgebra invariant by $\rho_i$ for $i=1,2,3$. Moreover, by virtue of Proposition \ref{h}, its orthogonal $\h_4$ is  a subalgebra. On the other hand, the endomorphisms $[\rho_1,\rho_2]-r_3\rho_3$, $[\rho_1,\rho_3]-r_2\rho_2$ and $[\rho_2,\rho_3]-r_1\rho_1$ are skew-symmetric and leave invariant $\p_4$ so they leave invariant $\h_4$. This implies, by virtue of \eqref{ad}, that $\h_4$ is an ideal of $\G_4$. But, since $\G$ is not Ricci-parallel, $\G_4$ is not solvable by virtue of Theorem \ref{main} and hence it is simple. But $\p_4\not=\{0\}$ otherwise $\G_4$ is Ricci-parallel. In conclusion $\h_4=0$ and $\G_4=\p_4$.
		
		Let us show now that $\{\om_{12},\om_{13},\om_{23}\}$  is orthonormal. This is true if $\rho_i\not=0$ for some $i$ by virtue of \eqref{9} and \eqref{10}. Suppose now that $\rho_1=\rho_2=\rho_3=0$. From \eqref{ad}, we get
		\[ [\om_{12},\om_{13}]=-\frac{\la_{24}^2}{\la_{23}^2}\langle \om_{13},\om_{23}\rangle\om_{13}+\frac{\la_{14}^2}{\la_{13}^2}\langle \om_{13},\om_{13}\rangle\om_{23}=-\frac{\la_{34}^2}{\la_{23}^2}\langle \om_{12},\om_{23}\rangle\om_{12}+\frac{\la_{14}^2}{\la_{12}^2}\langle \om_{12},\om_{12}\rangle\om_{23} \]
		This shows that $\langle \om_{13},\om_{23}\rangle=\langle \om_{12},\om_{23}\rangle=0$ and a similar argument gives $\langle \om_{13},\om_{12}\rangle=0$.
		
		In conclusion, we take $e_i=X_i$ for $i=1,2,3$ and  $(\om_{12},\om_{13},\om_{23})=(\mu_1e_4,\mu_2e_5,\mu_3e_6)$ to get the desired expression of the Lie brackets.
			\end{proof}

\begin{theo} Let $\G=\G_1\oplus\G_2\oplus\G_3\oplus\G_4$ a Euclidean Lie algebra with harmonic curvature such that
	$\dim\G_1=\dim\G_2=\dim\G_3=1$, $\dim\G_4=3$ and $\la_1<\la_2<\la_3$. Then $\G$ is Ricci-parallel.
	
	\end{theo}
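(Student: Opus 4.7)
The plan is to argue by contradiction. Suppose $\G$ is not Ricci-parallel; then Proposition \ref{prj} gives an orthonormal basis $(e_1,\ldots,e_6)$ in which the Lie brackets take the explicit form \eqref{j}, with $\mu_1,\mu_2,\mu_3>0$, $abc\ne 0$, and real parameters $r,\al_1,\al_2,\al_3$. The task is to extract a numerical contradiction from \eqref{j} together with the Jacobi identity and the Ricci eigenvalue conditions. Note that by Theorem \ref{main}, $\G_4$ is not solvable, hence simple, so it is isomorphic to $\mathrm{su}(2)$ or $\mathrm{sl}(2,\R)$.

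First, I would systematically exploit the Jacobi identity. Applying $\mathrm{Jac}(e_i,e_j,e_k)=0$ to all twenty triples, the interesting information sits on the mixed triples that couple $\G_1\oplus\G_2\oplus\G_3$ with $\G_4$ (such as $(e_1,e_2,e_4)$, $(e_1,e_3,e_6)$, $(e_2,e_5,e_6)$, etc.) and on $(e_1,e_2,e_3)$. These produce a large batch of polynomial identities relating $a,b,c,\al_i,\mu_i,r$ and the ratios $\la_{ij}^2/\la_{kl}^2$. In particular, Jacobi on $(e_4,e_5,e_6)$ and on triples of the form $(e_i,e_4,e_5)$ pins down the relative signs of $a,b,c$ (distinguishing the $\mathrm{su}(2)$ and $\mathrm{sl}(2,\R)$ cases) and expresses $\al_1,\al_2,\al_3$ in terms of $\mu_1,\mu_2,\mu_3,r$ and the $\la_{ij}$.

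Second, using Koszul's formula \eqref{levicivita}, I would compute the Ricci operator in the basis $(e_1,\ldots,e_6)$ and impose that $\Ric(e_i)=\la_i e_i$ for $i=1,2,3$, that $\Ric|_{\G_4}=\la_4 \mathrm{Id}_{\G_4}$ for some $\la_4$, and that $\Ric$ is block-diagonal with respect to the decomposition $\G_1\oplus\G_2\oplus\G_3\oplus\G_4$. Formula \eqref{r} applied to each $e_i$ for $i=1,2,3$ already gives $\la_i$ as an explicit rational function of $\mu_1,\mu_2,\mu_3,\al_1,\al_2,\al_3,r$ and the $\la_{ij}$. The off-diagonal vanishing together with the scalar condition on $\Ric|_{\G_4}$ furnishes additional polynomial equations that couple $a,b,c$ to the remaining unknowns.

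Finally, the aggregate of Jacobi and Ricci relations is a polynomial system in $a,b,c,\al_1,\al_2,\al_3,\mu_1,\mu_2,\mu_3,r,\la_1,\la_2,\la_3,\la_4$. One must show that it admits no solution with $\mu_1\mu_2\mu_3>0$, $abc\ne 0$ and $\la_1<\la_2<\la_3$; the expected outcome is an elimination polynomial forcing some $\mu_i$ to vanish, contradicting Proposition \ref{prj}. This is where the proof becomes computational and where the obstacle lies: the system is large (on the order of ten unknowns and several dozen relations), the denominators $\la_{ij}^2$ must be tracked to justify cancellations, and one will likely have to split into the subcases $r=0$ versus $r\ne 0$ and into $\mathrm{su}(2)$ versus $\mathrm{sl}(2,\R)$, verifying inconsistency in each by a Gr\"obner basis or successive elimination computation in Maple.
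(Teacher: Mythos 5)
Your overall strategy is the same as the paper's: assume $\G$ is not Ricci-parallel, invoke Proposition \ref{prj} to reach the normal form \eqref{j}, then grind the Jacobi identity and the curvature conditions until something breaks. But there are two genuine gaps. The first is that the entire content of the theorem is the computation you defer: the paper's proof consists precisely of executing it. Concretely, the Jacobi components $J(e_i,e_j,e_k)_l$ on triples such as $(e_1,e_4,e_6)$ and $(e_2,e_4,e_5)$ combine into a homogeneous linear system $M(\mu_1,\mu_2,\mu_3)^{T}=0$ whose determinant is a nonzero multiple of $\al_1\al_2\al_3$, forcing $\al_1=\al_2=\al_3=0$; the triples $(e_1,e_2,e_5)$, $(e_1,e_2,e_6)$, $(e_1,e_3,e_4)$ then force $r=0$ via factorizations of the type $(\la_2-\la_3)(\la_4-\la_1)$; and the surviving Jacobi relations determine $a,b,c,\mu_2,\mu_3$ explicitly in terms of $\mu_1$ and the $\la_{ij}$. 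A plan that merely announces "a large batch of polynomial identities" and a Gr\"obner computation has not proved anything yet.

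The second gap is more serious: your predicted endpoint --- ``an elimination polynomial forcing some $\mu_i$ to vanish'' --- is provably not where the contradiction lies. After the Jacobi analysis one is left with a genuine one-parameter family of Lie algebras (any $\mu_1>0$), which the paper's closing Proposition identifies as $\mathrm{su}(2)\oplus\mathrm{su}(2)$ carrying the essential Codazzi tensor $\mathrm{Diag}(\la_1,\la_2,\la_3,\la_4,\la_4,\la_4)$. So the Jacobi identity, together with the abstract Codazzi relations \eqref{sk} and \eqref{3}, is perfectly consistent with $\mu_1\neq0$, and no elimination will kill the $\mu_i$. The contradiction appears only when one demands that this Codazzi tensor be the \emph{actual} Ricci operator of the metric: computing $\na_{e_1}(\Ric)(e_2,e_4)-\na_{e_2}(\Ric)(e_1,e_4)$ and its two companions for the true Ricci tensor yields three quadratic constraints $q_1=q_2=q_3=0$ on the eigenvalues, whose pairwise differences factor through $\la_1-\la_2+\la_4-\la_3$ and $\la_1-\la_2-\la_4+\la_3$; adding these gives $2(\la_1-\la_2)=0$, contradicting $\la_1<\la_2$. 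Your plan does list the condition $\Ric(e_i)=\la_i e_i$ among the equations to impose, which in principle encodes the same obstruction, but as written you are steering the computation toward a dead end: the decisive equations live entirely among the eigenvalues $\la_i$, not among the structure constants $\mu_i$.
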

	
	\begin{proof} Suppose that $\G$ is not Ricci-parallel. Then we can apply Proposition \ref{prj} and  there exists an orthonormal basis $\B=(e_1,\ldots,e_6)$ of $\G$ such that \eqref{j} holds. For any $u,v,w\in\G$, put 
		\[ J(u,v,w)=[u,[v,w]]+[v,[w,u]]+[w,[u,v]] \]and denote by $u_i$ the $i$-component of $u$ in the basis $\B$. A direct computation gives
		\[ \begin{cases} J(e_1,e_4,e_6)_4=-b\alpha_{{1}}-{\frac {\alpha_{{2}}{\lambda_{{14}}}^{2}\mu_{{1}}}{{
					\lambda_{{12}}}^{2}}}-\alpha_{{1}}c=0,\\
		J(e_1,e_5,e_6)_5=\alpha_{{1}}c-{\frac {\alpha_{{3}}{\lambda_{{14}}}^{2}\mu_{{2}}}{{
					\lambda_{{13}}}^{2}}}+b\alpha_{{1}}=0,\\
		J(e_2,e_4,e_5)_4=-a\alpha_{{2}}+{\frac {\alpha_{{1}}{\lambda_{{24}}}^{2}\mu_{{1}}}{{
					\lambda_{{12}}}^{2}}}+\alpha_{{2}}c=0,\\
		J(e_2,e_5,e_6)_6=\alpha_{{2}}c-{\frac {\alpha_{{3}}{\lambda_{{24}}}^{2}\mu_{{3}}}{{
					\lambda_{{23}}}^{2}}}-a\alpha_{{2}}=0,\\
		J(e_3,e_4,e_5)_5=-a\alpha_{{3}}+{\frac {\alpha_{{1}}{\lambda_{{34}}}^{2}\mu_{{2}}}{{
					\lambda_{{13}}}^{2}}}-\alpha_{{3}}b=0,\\
		J(e_3,e_4,e_6)_6=\alpha_{{3}}b+{\frac {\alpha_{{2}}{\lambda_{{34}}}^{2}\mu_{{3}}}{{
					\lambda_{{23}}}^{2}}}+a\alpha_{{3}}=0.\\
		\end{cases} \]If $E_i$ designs the $i$-equation in this system then $E_1+E_2=0$, $E_3-E_4=0$ and $E_5+E_6=0$ are equivalent to
		\[ \begin{cases}\di
		-{\frac {\alpha_{{2}}{\lambda_{{14}}}^{2}\mu_{{1}}}{{\lambda_{{12}}}
				^{2}}}-{\frac {\alpha_{{3}}{\lambda_{{14}}}^{2}\mu_{{2}}}{{\lambda_{{
							13}}}^{2}}}=0,\\\di
		{\frac {\alpha_{{1}}{\lambda_{{24}}}^{2}\mu_{{1}}}{{\lambda_{{12}}}^
				{2}}}+{\frac {\alpha_{{3}}{\lambda_{{24}}}^{2}\mu_{{3}}}{{\lambda_{{2
							3}}}^{2}}}=0,\\\di
		{\frac {\alpha_{{1}}{\lambda_{{34}}}^{2}\mu_{{2}}}{{\lambda_{{13}}}^
				{2}}}+{\frac {\alpha_{{2}}{\lambda_{{34}}}^{2}\mu_{{3}}}{{\lambda_{{2
							3}}}^{2}}}=0.
			\end{cases} \]
		This is equivalent to 
		\[ M\left(\begin{array}{c}\mu_1\\\mu_2\\\mu_3\end{array}   \right)
		=\left(\begin{array}{c}0\\0\\0\end{array}   \right)
		\esp M=\left( \begin {array}{ccc} -{\frac {\alpha_{{2}}{\lambda_{{14}}}^{2}
			}{{\lambda_{{12}}}^{2}}}&-{\frac {\alpha_{{3}}{\lambda_{{14}}}^{2}}{
			{\lambda_{{13}}}^{2}}}&0\\ \noalign{\medskip}{\frac {\alpha_{{1}}{
				\lambda_{{24}}}^{2}}{{\lambda_{{12}}}^{2}}}&0&{\frac {\alpha_{{3}}{
				\lambda_{{24}}}^{2}}{{\lambda_{{23}}}^{2}}}\\ \noalign{\medskip}0&{
		\frac {\alpha_{{1}}{\lambda_{{34}}}^{2}}{{\lambda_{{13}}}^{2}}}&{
		\frac {\alpha_{{2}}{\lambda_{{34}}}^{2}}{{\lambda_{{23}}}^{2}}}
	\end {array} \right). 
 \]Since $(\mu_1,\mu_2,\mu_3)\not=(0,0,0)$ we get
 \[ \det M=2\,{\frac {\alpha_{{2}}{\lambda_{{14}}}^{2}\alpha_{{3}}{\lambda_{{24
 				}}}^{2}\alpha_{{1}}{\lambda_{{34}}}^{2}}{{\lambda_{{12}}}^{2}{
 				\lambda_{{23}}}^{2}{\lambda_{{13}}}^{2}}}=0.
 	 \]We conclude that $\al_1=\al_2=\al_3=0$. 
 	 
 	 On the other hand,
 	 \[ \begin{cases}\di J(e_1,e_2,e_5)_1=-{\frac {\mu_{{2}}r \left( \lambda_{{12}}\lambda_{{34}}-
 	 		\lambda_{{14}}\lambda_{{23}} \right)  \left( \lambda_{{12}}\lambda_
 	 		{{34}}+\lambda_{{14}}\lambda_{{23}} \right) }{{\lambda_{{23}}}^{2}
 	 		{\lambda_{{13}}}^{2}}}=0\\\di
 	 J(e_1,e_2,e_6)_2=-{\frac {\mu_{{3}}r \left( \lambda_{{12}}\lambda_{{34}}-
 	 		\lambda_{{13}}\lambda_{{24}} \right)  \left( \lambda_{{12}}\lambda_
 	 		{{34}}+\lambda_{{13}}\lambda_{{24}} \right) }{{\lambda_{{23}}}^{4}
 	 	}}=0\\\di
 	 J(e_1,e_3,e_4)_1={\frac {\mu_{{1}}r \left( \lambda_{{13}}\lambda_{{24}}-\lambda
 	 		_{{14}}\lambda_{{23}} \right)  \left( \lambda_{{13}}\lambda_{{24}}
 	 		+\lambda_{{14}}\lambda_{{23}} \right) }{{\lambda_{{12}}}^{2}{
 	 			\lambda_{{23}}}^{2}}}=0.
 	 \end{cases} \]If $r\not=0$ this is equivalent to
 	 \[ \left( \lambda_{{12}}\lambda_{{34}}-
 	 \lambda_{{14}}\lambda_{{23}} \right)  \left( \lambda_{{12}}\lambda_
 	 {{34}}+\lambda_{{14}}\lambda_{{23}} \right)=
 	 \left( \lambda_{{12}}\lambda_{{34}}-
 	 \lambda_{{13}}\lambda_{{24}} \right)  \left( \lambda_{{12}}\lambda_
 	 {{34}}+\lambda_{{13}}\lambda_{{24}} \right)=0.  \]But
 	 \[\begin{cases}\left( \lambda_{{12}}\lambda_{{34}}-
 	 \lambda_{{13}}\lambda_{{24}} \right)=(\la_2-\la_3)(\la_4-\la_1),\\
 	 \left( \lambda_{{12}}\lambda_{{34}}+
 	 \lambda_{{14}}\lambda_{{23}} \right)=(\la_1-\la_3)(\la_2-\la_4),\\
 	 \left( \lambda_{{12}}\lambda_{{34}}-
 	 \lambda_{{14}}\lambda_{{23}} \right)+
 	 \left( \lambda_{{12}}\lambda_{{34}}+
 	 \lambda_{{13}}\lambda_{{24}} \right)=3(\la_1-\la_2)(\la_3-\la_4),
 	 \end{cases}
 	   \]which shows that this situation is impossible and hence $r=0$.
 	   
 	  So far we have shown that $r=\al_1=\al_2=\al_3=0$. Now,
 	   \[ J(e_1,e_2,e_5)_6={\frac {{\lambda_{{14}}}^{2}\mu_{{2}}\mu_{{3}}}{{\lambda_{{13}}}^{2}
 	   	}}-\mu_{{1}}a
 	   	,\; J(e_1,e_2,e_6)_5=-{\frac {{\lambda_{{24}}}^{2}\mu_{{3}}\mu_{{2}}}{{\lambda_{{23}}}^{2
 	   			}}}-\mu_{{1}}b,\;
 	   			 J(e_1,e_3,e_6)_4={\frac {{\lambda_{{34}}}^{2}\mu_{{3}}\mu_{{1}}}{{\lambda_{{23}}}^{2}
 	   			 	}}-\mu_{{2}}c
 	   			 	\]and  hence
 	 \[ a={\frac {{\lambda_{{14}}}^{2}\mu_{{2}}\mu_{{3}}}{{\lambda_{{13}}}^{2}
 	 		\mu_{{1}}}},\;b=-{\frac {{\lambda_{{24}}}^{2}\mu_{{3}}\mu_{{2}}}{{\lambda_{{23}}}^{2
 	 		}\mu_{{1}}}}\esp c={\frac {{\lambda_{{34}}}^{2}\mu_{{3}}\mu_{{1}}}{{\lambda_{{23}}}^{2}
 	 		\mu_{{2}}}}.
 	   \]With all the relations established above, the vanishing of the Jacobi identity is equivalent to 
 	   \[ {\lambda_{{12}}}^{2}{\mu_{{2}}}^{2}-{\lambda_{{13}}}^{2}{\mu_{{1}}}
 	   ^{2}={\lambda_{{12}}}^{2}{\mu_{{3}}}^{2}-{\lambda_{{23}}}^{2}{\mu_{{
 	   			1}}}^{2}={\lambda_{{13}}}^{2}{\mu_{{3}}}^{2}-{\lambda_{{23}}}^{2}{
 	   	\mu_{{2}}}^{2}=0,
 	    \]which is equivalent to
 	    \[ \mu_2=\frac{\la_{13}\mu_1}{\la_{12}}\esp \mu_{3}=\frac{\la_{23}\mu_1}{\la_{12}}. \]
		We replace in $a,b,c$ and we get
		\[ a=\frac{\la_{14}^2\la_{23}\mu_1}{\la_{12}^2\la_{13}},\;
		b=-\frac{\la_{24}^2\la_{13}\mu_1}{\la_{12}^2\la_{23}}\esp c=
		\frac{\la_{34}^2\mu_1}{\la_{23}\la_{13}}. \]
		Now, by using Maple we get
		\[ \begin{cases}\di\na_{e_1}(\Ric)(e_2,e_4)-
		\na_{e_2}(\Ric)(e_1,e_4)=4\,{\frac {{\mu_{{1}}}^{3} \left( \lambda_{{1}}-\lambda_{{4}}
				\right)  \left( \lambda_{{2}}-\lambda_{{4}} \right)  \left( {\lambda_
					{{1}}}^{2}+ \left( -\lambda_{{2}}-\lambda_{{4}} \right) \lambda_{{1}}+
				{\lambda_{{2}}}^{2}-\lambda_{{2}}\lambda_{{4}}+{\lambda_{{4}}}^{2}
				\right) }{ \left( \lambda_{{1}}-\lambda_{{2}} \right) ^{4}}},\\
	\di	\na_{e_1}(\Ric)(e_3,e_5)-
		\na_{e_3}(\Ric)(e_1,e_5)=4\,{\frac {{\mu_{{1}}}^{3} \left( \lambda_{{1}}-\lambda_{{4}}
				\right)  \left( \lambda_{{3}}-\lambda_{{4}} \right)  \left( {\lambda_
					{{1}}}^{2}+ \left( -\lambda_{{3}}-\lambda_{{4}} \right) \lambda_{{1}}+
				{\lambda_{{3}}}^{2}-\lambda_{{3}}\lambda_{{4}}+{\lambda_{{4}}}^{2}
				\right) }{ \left( \lambda_{{1}}-\lambda_{{3}} \right)  \left( \lambda
				_{{1}}-\lambda_{{2}} \right) ^{3}}},\\
	\di	\na_{e_2}(\Ric)(e_3,e_6)-
		\na_{e_3}(\Ric)(e_2,e_6)=4\,{\frac { \left( \lambda_{{2}}-\lambda_{{4}} \right) {\mu_{{1}}}^{3
				} \left( {\lambda_{{2}}}^{2}+ \left( -\lambda_{{3}}-\lambda_{{4}}
				\right) \lambda_{{2}}+{\lambda_{{3}}}^{2}-\lambda_{{3}}\lambda_{{4}}+
				{\lambda_{{4}}}^{2} \right)  \left( \lambda_{{3}}-\lambda_{{4}}
				\right) }{ \left( \lambda_{{2}}-\lambda_{{3}} \right)  \left( \lambda
				_{{1}}-\lambda_{{2}} \right) ^{3}}}.
			\end{cases} \]
			So we must have
			\[\begin{cases} q_1:={\lambda_{{1}}}^{2}+ \left( -\lambda_{{2}}-\lambda_{{4}} \right) 
			\lambda_{{1}}+{\lambda_{{2}}}^{2}-\lambda_{{2}}\lambda_{{4}}+{\lambda_
				{{4}}}^{2}=0,\\
			q_2:={\lambda_{{1}}}^{2}+ \left( -\lambda_{{3}}-\lambda_{{4}} \right) 
			\lambda_{{1}}+{\lambda_{{3}}}^{2}-\lambda_{{3}}\lambda_{{4}}+{\lambda_
				{{4}}}^{2}=0,\\
			q_3:={\lambda_{{2}}}^{2}+ \left( -\lambda_{{3}}-\lambda_{{4}} \right) 
			\lambda_{{2}}+{\lambda_{{3}}}^{2}-\lambda_{{3}}\lambda_{{4}}+{\lambda_
				{{4}}}^{2}=0.
			\end{cases}
		 \]This is equivalent
		 \[ \begin{cases}q_1-q_2=- \left( \lambda_{{2}}-\lambda_{{3}} \right)  \left( -\lambda_{{2}}+
		 \lambda_{{1}}+\lambda_{{4}}-\lambda_{{3}} \right) =0,\\
		 q_1-q_3=\left( \lambda_{{1}}-\lambda_{{3}} \right)  \left( \lambda_{{1}}-
		 \lambda_{{2}}-\lambda_{{4}}+\lambda_{{3}} \right)=0,\\ 
		 q_3=0
		 \end{cases} \]
		which is equivalent to
		\[ \begin{cases}p_1=  \left( -\lambda_{{2}}+
		\lambda_{{1}}+\lambda_{{4}}-\lambda_{{3}} \right) =0,\\
		p_2=  \left( \lambda_{{1}}-
		\lambda_{{2}}-\lambda_{{4}}+\lambda_{{3}} \right)=0,\\ 
		q_3=0.
		\end{cases} \]This implies that $p_1+p_2=2(\la_1-\la_2)=0$ which is impossible. This completes the proof.
			\end{proof}
			Recall that a left-invariant symmetric tensor field $A$ on $\G$ is called  essential Codazzi if it satisfies the Codazzi equation, it is non parallel
			 and none of its  eigenspace subalgebras $\G_k$ is an ideal.
			The proof of the precedent result gives the following:
\begin{pr} We consider the 6-dimensional Euclidean Lie algebra with an orthonormal basis $(e_1,\ldots,e_6)$ in which  the non-vanishing Lie brackets are given by
	\[ \begin{cases}[e_1,e_2]=\mu_1e_4,[e_1,e_3]={\frac { \left( \lambda_{{1}}-\lambda_{{3}} \right) \mu_{{1}}}{\lambda
			_{{1}}-\lambda_{{2}}}}e_5,[e_1,e_4]=-{\frac { \left( \lambda_{{1}}-
			\lambda_{{4}} \right) ^{2}\mu_{{1}}}{ \left( \lambda_{{1}}-\lambda_{{2
				}} \right) ^{2}}}e_2, [e_1,e_5]=-{\frac { \left( \lambda_{{1}}-
				\lambda_{{4}} \right) ^{2}\mu_{{1}}}{ \left( \lambda_{{1}}-\lambda_{{3
					}} \right)  \left( \lambda_{{1}}-\lambda_{{2}} \right) }}e_3,\\
			[e_2,e_3]={\frac { \left( \lambda_{{2}}
					-\lambda_{{3}} \right) \mu_{{1}}}{\lambda_{{1}}-\lambda_{{2}}}}e_6, [e_2,e_4]={\frac { \left( \lambda_{{2}}-\lambda_{
						{4}} \right) ^{2}\mu_{{1}}}{ \left( \lambda_{{1}}-\lambda_{{2}}
					\right) ^{2}}}e_1,[e_2,e_6]=-{\frac { \left( \lambda_{{2}}-
					\lambda_{{4}} \right) ^{2}\mu_{{1}}}{ \left( \lambda_{{2}}-\lambda_{{3
						}} \right)  \left( \lambda_{{1}}-\lambda_{{2}} \right) }}e_3,[e_3,e_5]={\frac { \left( \lambda_{{3}}-\lambda_{
						{4}} \right) ^{2}\mu_{{1}}}{ \left( \lambda_{{1}}-\lambda_{{3}}
					\right)  \left( \lambda_{{1}}-\lambda_{{2}} \right) }}e_1,\\
			[e_3,e_6]={\frac { \left( \lambda_{{3}}-\lambda
					_{{4}} \right) ^{2}\mu_{{1}}}{ \left( \lambda_{{2}}-\lambda_{{3}}
					\right)  \left( \lambda_{{1}}-\lambda_{{2}} \right) }}e_2,
			[e_4,e_5]={\frac { \left( \lambda_{{1}}
					-\lambda_{{4}} \right) ^{2}\mu_{{1}} \left( \lambda_{{2}}-\lambda_{{3}
					} \right) }{ \left( \lambda_{{1}}-\lambda_{{3}} \right)  \left( 
					\lambda_{{1}}-\lambda_{{2}} \right) ^{2}}}e_6,[e_4,e_6]=-\frac{\la_{24}^2\la_{13}\mu_1}{\la_{12}^2\la_{23}}e_5,\;
			[e_5,e_6]=
			{\frac { \left( \lambda_{{3}}-
					\lambda_{{4}} \right) ^{2}\mu_{{1}}}{ \left( \lambda_{{1}}-\lambda_{{3
						}} \right)  \left( \lambda_{{2}}-\lambda_{{3}} \right) }}e_4.
				\end{cases} \]Then the operator $A=\mathrm{Diag}(\la_1,\la_2,\la_3,\la_4,\la_4,\la_4)$ is a symmetric essential Codazzi tensor.  This Lie algebra is compact (its Killing form is negative definite) and hence it is isomorphic to $\mathrm{su}(2)\oplus\mathrm{su}(2)$.
	
	\end{pr}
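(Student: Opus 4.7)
The brackets listed in the statement are precisely those obtained during the proof of the previous theorem: starting from the ansatz \eqref{j} of Proposition \ref{prj}, setting $r=\al_1=\al_2=\al_3=0$, and substituting the Jacobi-imposed values $\mu_2=\la_{13}\mu_1/\la_{12}$, $\mu_3=\la_{23}\mu_1/\la_{12}$ together with the expressions for $a,b,c$. Consequently the Jacobi identity is automatic and no fresh verification is needed. The Codazzi property of $A$ with respect to the decomposition $\G=\R e_1\oplus\R e_2\oplus\R e_3\oplus\spa\{e_4,e_5,e_6\}$ would be checked by applying Theorem \ref{prel}. Each $\R e_i$ is trivially a subalgebra and the fourth summand is one by inspection of the bracket table. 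The skew-symmetry condition \eqref{sk} for a one-dimensional eigenspace reduces to $\langle[w,e_i],e_i\rangle=0$, which holds because in every listed bracket the image is orthogonal to the differentiated entry; for the three-dimensional eigenspace it is immediate since $\ad_{e_i}$ for $i=1,2,3$ sends $\spa\{e_4,e_5,e_6\}$ into $\R e_1\oplus\R e_2\oplus\R e_3$. For each triple relation in \eqref{3}, one verifies directly, case by case, that the two bracket coefficients involved have precisely the quadratic ratio in the $\la_{ij}$ demanded by that relation; this was engineered into the construction, so only a one-line check is needed per triple.

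Non-parallelism and the ideal condition are then quick consequences. By the last part of Theorem \ref{prel}, $A$ is non-parallel iff some $\langle[u_0,v_0],w_0\rangle$ with $u_0,v_0,w_0$ in three distinct eigenspaces is nonzero, which holds since $\langle[e_1,e_2],e_4\rangle=\mu_1\neq 0$. No eigenspace is an ideal: $\R e_1$ and $\R e_2$ fail because $[e_1,e_2]=\mu_1 e_4$ lies outside them, $\R e_3$ fails by the analogous computation with $[e_1,e_3]$, and $\spa\{e_4,e_5,e_6\}$ fails because $[e_1,e_4]$ has a nonzero $e_2$-component. So $A$ is an essential Codazzi tensor.

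The remaining substantive task is the identification as $\mathrm{su}(2)\oplus\mathrm{su}(2)$. The plan is to compute the Killing form $B(X,Y)=\tr(\ad_X\ad_Y)$ in the basis $(e_1,\dots,e_6)$; because the structure constants come in matched pairs linked by the ratios $\la_{ij}^2$, a direct calculation (naturally handled with a computer algebra system) should give $B$ diagonal in this basis, with each diagonal entry a manifestly negative polynomial in $\mu_1^2$ and the $\la_{ij}^2$. Negative-definiteness of $B$ forces $\G$ to be a compact semisimple real Lie algebra of dimension $6$, and the only such algebra is $\mathrm{so}(4)\cong\mathrm{su}(2)\oplus\mathrm{su}(2)$. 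The main obstacle is this Killing-form computation together with the sign check on its diagonal entries; all the remaining content of the statement is bookkeeping of the calculations already performed in the preceding theorem.
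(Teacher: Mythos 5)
Your proposal is correct and follows essentially the same route as the paper, whose entire ``proof'' is the remark that the preceding theorem's computations already establish everything: the bracket table is exactly \eqref{j} with $r=\al_1=\al_2=\al_3=0$ and the Jacobi-imposed values of $\mu_2,\mu_3,a,b,c$, the Codazzi conditions of Theorem \ref{prel} are built into \eqref{j}, non-parallelism and the non-ideal property follow from $\mu_1\neq0$, and the negative-definiteness of the Killing form is asserted from direct computation. Your added details (the explicit witnesses for non-parallelism and the non-ideal checks, and the classification step ``compact semisimple of dimension $6$ $\Rightarrow$ $\mathrm{su}(2)\oplus\mathrm{su}(2)$'') are accurate and consistent with the authors' intent.
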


\end{document}